\providecommand{\U}[1]{\protect\rule{.1in}{.1in}}
\providecommand{\U}[1]{\protect\rule{.1in}{.1in}}
\providecommand{\U}[1]{\protect\rule{.1in}{.1in}}
\providecommand{\U}[1]{\protect\rule{.1in}{.1in}}
\numberwithin{equation}{section}
\newtheorem{lemma}{Lemma}[section]
\newtheorem{theorem}{Theorem}[section]
\newtheorem{corollary}{Corollary}[section]
\newtheorem{proposition}{Proposition}[section]
\newtheorem{definition}{Definition}[section]
\newtheorem{remark}{Remark}[section]
\newcommand{\abs}[1]{|#1|}
\def \R {{\bf R}}
\def \d {\delta}
\def \esp {{[0,T]\times \R^m}}
\def \P {{\cal P}}
\def \m {{\cal M}}
\def \cf {\mathcal{F}}
\def \bp {{\bf{P}}}
\def \be {{\bf E}}
\def \bP {{\bf P}}
\def \bs {\bigskip}
\def \ms {\medskip}
\def \no {\noindent}
\def \tx {(t,x)\in \esp}
\def \cM {\mathcal{M}}
\def \cP {\mathcal{P}}
\def \rw {\rightarrow}
\def \qq {\qquad}
\def \dt {\dots}
\begin{document}

\markboth{Said Hamad\`ene and Rui Mu}{Stochastics: An International Journal of Probability and Stochastic Processes}

%

\title{\vspace{-1in}\parbox{\linewidth}{\footnotesize\noindent
} \vspace{\bigskipamount} \\Existence of Nash Equilibrium Points for Markovian Nonzero-sum Stochastic Differential Games with Unbounded Coefficients }
\author{Said Hamad\`ene$^{1}$,\ \ Rui Mu$^{1 2}$\\$^{1}$Universit\'e du Maine, LMM, Avenue Olivier Messiaen, 72085 Le Mans, Cedex 9, France\\$^{2}$School of Mathematics, Shandong University, Jinan 250100, China\\E-mails: hamadene@univ-lemans.fr; rui.mu.sdu@gmail.com}
\date{\today}
\maketitle

\maketitle

\begin{abstract}
This paper is related to nonzero-sum stochastic differential games in the Markovian framework. We show existence of a Nash equilibrium point for the game when the drift is no longer bounded and  only satisfies a linear growth condition. The main tool is the notion of backward stochastic differential equations which, in our case, are multidimensional with continuous coefficient and stochastic linear growth.  
\end{abstract}
\medskip

\noindent \textbf{Keywords:} Nonzero-sum Stochastic Differential Games; Nash equilibrium point; Backward Stochastic Differential
Equations.
\bs

\noindent \textbf{AMS subject classification:} 49N70 ; 49N90 ; 91A15.

\section{Introduction}
In this work, we analyze a nonzero-sum stochastic differential game (NZSDG for short) which is described as follows. Assume one has $N$ players $\pi_1, ..., \pi_N$ which intervene on (or control) a system. Each one with the help of an admissible control which is an adapted stochastic process $u^i:=(u_t^i)_{t\leq T}$ for $\pi_i$, $i=1,\dt,N$. When the $N$ players make use of a strategy $u:=(u^1,...,u^N)$, the dynamics of the controlled system is a process $(x^u_t)_{t\leq T}$  solution of the following standard stochastic differential equation (SDE for short):
\begin{equation}\label{sdeg}
dx^u_t=f(t,x^u_t,u_t^1,...,u^N_t)dt+\sigma(t,x^u_t)dB_t \mbox{ for }t\leq T \, \mbox{ and } \,x_0=x\,;\end{equation}
$B:=(B_t)_{t\leq T}$ is a Brownian motion. The control actions are not free and generate for each player $\pi_i$, $i=1,...,N$, a payoff which amounts to
$$
J_i(u^1,...,u^N):=\be[\begin{array}{c}g^i(x^u_T)+\int_0^Th_i(s,x^u_s,u_s)ds\end{array}].$$
A Nash equilibrium point (NEP for short) for the players is a strategy $u^*:=(u^{1,*}, ...,u^{N,*})$ of control of the system which has the feature that each player $\pi_i$ who takes unilaterally the decision to deviate from $u^{i,*}$, is penalized: For all $i=1,...,N$, for all control $u^i$ of player $\pi_i$, 
$$
J_i(u^*)\leq J_i([u^{*,-i}|u^i])$$
where $[u^{*,-i}|u^i]:=(u^{1,*},\dt, u^{i-1,*},u^i,u^{i+1,*},\dt, u^{N,*}).$

In the case when $N=2$ and $J_1+J_2=0$, this game reduces to the well known zero-sum differential game which is well documented in several works and from several points of view (see e.g. \cite{bensoussan-lions}, \cite{buckdahn1}, \cite{daviselliott}, \cite{elliott}, \cite{flemingsouganidis},  \cite{friedman}, \cite{hamadene1995zero},  \cite{hamadene1995backward}, \cite{isaacs} etc. and the references inside).

Comparatively, the nonzero-sum differential game is so far less considered even though there are some works on the subject, including  \cite{buckdahn2004},  \cite{friedman2}, \cite{hamadene1997bsdes}, 
\cite{hamadene1},  \cite{hamadene3}, \cite{hamadene4}, \cite{lepeltier}, \cite{qianlin}, \cite{mannucci}, \cite{rainer}, etc.). In these works, the objectives are various and so are the approaches,  usually based on partial differential equations (PDEs) (\cite{friedman2, mannucci}) or backward SDEs (\cite{hamadene1997bsdes, hamadene3, hamadene4, qianlin, lepeltier}). On the other hand, it should be pointed out that the frameworks in those papers are not the same. Some of them consider strategies as control actions for the players (e.g. \cite{buckdahn2004}, \cite{qianlin},  \cite{rainer}) while others deal with the control against control setting (e.g. \cite{hamadene1}, \cite{hamadene1997bsdes, hamadene3, hamadene4}). 
The first ones, formulated usually in the framework of two players, allow to study the case where the diffusion coefficient $\sigma$ is controlled. In the latter ones, $\sigma$ does not depend on the controls. However those papers do not reach the same objective. Note that for the control against control zero-sum game, Pham and Zhang \cite{pham} and M.Sirbu \cite{sirbu} have overcome this restriction related to the independence of $\sigma$ on the controls.  

In the present article, we study the nonzero-sum game of type control against control with the diffusion process  $\sigma$ independent of controls, in the same line as in the paper by Hamad\`ene et al. \cite{hamadene1997bsdes}. But in \cite{hamadene1997bsdes}, the setting concerns only the case when the coefficients $f$ and $\sigma$ of the diffusion (\ref{sdeg}) are bounded. According to our knowledge the setting where those coefficients are not bounded and of linear growth is not considered yet. Therefore the main objective of this work is to relax as much as possible the boundedness of the coefficients $f$ (mainly) and $\sigma$(which is not bounded as stated in the final extension). The novelty of the paper is that we show the existence of a Nash equilibrium point for the NZSDG when $f$ is no longer bounded but only satisfies the linear growth condition. As in \cite{hamadene1997bsdes} our approach is based on backward SDEs and basically the problem turns into studying its associated multi-dimensional BSDE which is of linear growth $\omega$ by $\omega$. Under the generalized Isaacs hypothesis and the domination condition of laws of solutions of (\ref{sdeg}), which is satisfied when H\"ormander's condition on $\sigma$ is satisfied, we show that the latter BSDE has a solution which then provides a NEP for the NZSDG. 

The paper is organized as follows:

In Section 2 we fix the setting of the problem and recall some results which play an important role in our study. The formulation we adopt is of weak type. On the other hand for the sake of simplicity we have made the presentation for $N=2$. The generalization to the situation where $N\geq 3$ is formal and can be carried out in the same spirit. Section 3 is devoted to the link between the game and BSDEs. We first express the payoffs of the game in using solutions of BSDEs whose integrability is not standard. Then we show that the existence of a NEP for the game turns into the existence of a solution of a specific BSDE which is of multi-dimensional type and linear growth $\omega$ by $\omega$.  It plays a role of a verification theorem for the NZSDG.  In Section 4 we show that this specific BSDE has a solution when the generalized Isaacs condition is fulfilled and the laws of the dynamics of the non-controlled system satisfy the so-called $L^q$-domination condition. This latter is especially satisfied when the diffusion coefficient $\sigma$ satisfies the well known H\"ormander's condition. 
Our method is based on: (i) the introduction of an approximating scheme of BSDEs which is well-posed since the coefficients are Lipschitz. In this markovian framework  of randomness, the solutions $(Y^n,Z^n)$, $n\geq 1$, of this  scheme can be represented via deterministic functions $(\varpi^n,\upsilon^n)$, $n\geq 1$, and the Markov process as well ; (ii) sharp estimates for $(Y^n,Z^n)$ and $(\varpi^n,\upsilon^n)$ and the $L^q$-domination condition enable us to obtain the strong convergence of a subsequence $(\varpi_{n_k})_{k\geq 1}$ from a  weak convergence in an appropriate space. This yields the strong convergence of the corresponding subsequences $(Y^{n_k})_{k\geq 1}$ and $(Z^{n_k})_{k\geq 1}$ ; (iii) we finally show that the limit of 
$(Y^{n_k}, Z^{n_k})_{k\geq 1}$ is a solution for the BSDE associated with the NZSDG. At the end of this section we provide an example which illustrates our result. We also discuss possible extensions of our findings to the case when both the drift $f$ and diffusion coefficient $\sigma$ of (\ref{sdeg}) are not bounded. 
$\Box$
\section{Setting of the problem}\label{section of setting of the
problem}

Let $T>0$ and let $(\Omega, \mathcal{F}, \bP)$ be a probability space on which is
defined an $m$-dimensional Brownian motion $B:=(B_t)_{0\leq t\leq T}$.
For $t\leq T$, let us denote by $(F_t:=\sigma(B_u, u\leq t))_{t\leq T}$ the natural filtration of $B$ and $(\mathcal{F}_t)_{t\leq T}$ the completion of $(F_t)_{t\leq T}$ with the $\bp$-null sets of $\cf$, which then satisfies the usual conditions. Let $\mathcal{P}$ be the
$\sigma$-algebra on $[0,T]\times \Omega$ of
$\mathcal{F}_t$-progressively measurable sets. Let $p\ge 1$ be a real constant and $t\in [0,T]$ fixed. We then define the following spaces:

\begin{itemize}
\item   $L^p_T(\R^m)=\{\xi: \mathcal{F}_T$-measurable and $\R^m$ -valued random variable s.t. $\be[|\xi|^p]<\infty\}$;
 \item  $\mathcal{S}_{t,T}^p(\R^m)=\{\varphi=(\varphi_s)_{t\leq s\leq T}: \mathcal{P}$ -measurable and $\R^m$ -valued s.t. $\be[sup_{t\leq s\leq T}|\varphi_s|^p]< \infty \}$ ; $ \mathcal{S}_{0,T}^p(\R^m)$ is simply denoted by $
\mathcal{S}_{T}^p(\R^m)$;
 \item $\mathcal{H}_{t,T}^p(\R^m)=\{Z=(Z_s)_{t\leq s\leq T}: \mathcal{P}$ -measurable and $\R^m$ -valued s.t. $\be[(\int_t^T|Z_s|^2ds)^{p/2}]< \infty
 \}$ ; $\mathcal{H}_{0,T}^p(\R^m)$ is simply denoted by 
$\mathcal{H}_{T}^p(\R^m)$. 
\end{itemize}
\medskip

\noindent Next let $\sigma$ be a matrix function defined as:
$$\begin{array}{cl}\sigma: [0,T]\times \R^m&\longrightarrow \R^{m\times
m}\\ (t,x)&\longmapsto \sigma (t,x)\end{array}$$ and which satisfies the following assumptions:
\begin{flushleft}
    \textbf{Assumptions (A1)}
\end{flushleft}
\begin{enumerate}
  \item[(i)] $\sigma$ is uniformly Lipschitz w.r.t $x$. \textit{i.e.} there exists a constant $C_1$  such that,
  $$\forall t \in [0, T], \forall \ x, x^{\prime} \in \R^m,\quad
\abs{ \sigma(t,x)-\sigma(t, x^{\prime})} \leq C_1 \abs{x-
x^{\prime}}.$$
  \item[(ii)] $\sigma$ is invertible and bounded and its inverse is bounded, \textit{i.e.}, there exits a constant $C_2$ such that $$\forall (t,x)\in
  \esp, \quad \abs {\sigma(t, x)} +\abs{\sigma^{-1}(t,x)}\leq C_2.$$
  \end{enumerate}
\medskip

\begin{remark} H\"ormander's condition.

\noindent Under (A1), there exists a real constant $\Upsilon>0$ such that for any $(t,x)\in \esp $,
\begin{equation}\label{horm}
\Upsilon.I\leq \sigma(t,x).\sigma^\top(t,x)\leq \Upsilon^{-1}.I
\end{equation}
where $I$ is the identity matrix of dimension $m$. $\Box$
\end{remark}

Next let $(t,x)\in [0,T]\times \R^m$. Under the Assumptions (A1), (i)-(ii), we know that there exists a process
$(X^{t,x}_s)_{s\leq T}$ that satisfies the following stochastic
differential equation (see e.g. Karatzas and Shreve, pp.289, 1991
\cite{karatzas1991brownian}):
\begin{equation*}
X^{t,x}_s= x+ \int_t^s \sigma(r,X^{t,x}_r)dB_r, \quad \forall s\in [t,T] \mbox{ and }
X^{t,x}_s=x \mbox{ for }s\in [0,t].
\end{equation*}

Hereafter for sake of simplicity we will deal with the setting of two players. However the generalization to the case of $N (\geq 3)$ players is formal and just a question of writing (see the comment of Remark \ref{generalisation}). Also let us denote by $U_1$ and $U_2$ two compact metric spaces and
let $\m _1$ (resp. $\m _2$) be the set of $\P$-measurable processes
$u=(u_t)_{t\leq T}$ (resp. $v=(v_t)_{t\leq T}$) with values in $U_1$
(resp. $U_2$). We denote by $\m$ the set $\m_1\times \m_2$ and call it the set of admissible controls for the players.
\medskip

Let $f$ be a Borelian function from $[0,T]\times \R^m \times U_1
\times U_2$ into $\R^m$ and for $i= 1, 2$ let $h_i$ and $g^i$ be
Borelian functions from $[0,T]\times \R^m \times U_1 \times U_2$
(resp. $\R^m$) into $\R$ which satisfy:
\begin{flushleft}
    \textbf{Assumptions (A2)}
\end{flushleft}
\begin{enumerate}
  \item[(i) ] $f$ is of linear growth w.r.t $x$, \textit{i.e.} there exists a constant $C_3$
  such that $\abs{f(t,x,u,v)}\leq C_3(1+\abs{x})$,  for any $(t,x,u,v)\in [0,T]\times \R^m\times U_1\times U_2.$
  \item[(ii)] for $i=1,2$ $h_i$ is of polynomial growth w.r.t $x$, \textit{i.e.}, there exists a constant $C_4$ and $\gamma \geq 0$ such that $\abs{h_i(t,x,u,v)}\leq C_4(1+\abs{x}^{\gamma})$ for any $(t,x,u,v)\in [0,T]\times \R^m\times U_1\times U_2.$
  \item[(iii)] for $i=1,2$, $g^i$ is of polynomial growth with respect to $x$, \textit{i.e.} there exist constants $C_5$ and $\gamma \geq 0$ such that $\abs{g^i(x)}\leq C_5(1+\abs{x}^{\gamma}),
\forall x\in \R^m$.$\Box$
\end{enumerate}

For $(u,v)\in \m$, let $\bp^{(u,v)}_{(t,x)}$ be the measure on $(\Omega,
\mathcal{F})$ whose density function is defined as follows:
\begin{equation}\label{defzeta}
\frac{d\bp^{(u,v)}_{(t,x)}}{d\bP} = \zeta_T\left(\sigma^{-1}(\cdot, X^{t,x}_{\cdot})f(\cdot,
X_{\cdot}^{t,x}, u_{\cdot}, v_{\cdot})\right),
\end{equation}
\noindent where for any measurable $\mathcal{F}_t$-adapted process $\eta:=(\eta_s)_{s\leq T}$ we define,  \begin{equation}\label{density function}\zeta_s(\eta) := e^{\int_0^s \eta_rdB_r- \frac{1}{2}\int_0^s
\left|\eta_r\right|^2dr},\,\forall s\leq T.
\end{equation}

\no Thanks to the Assumptions (A1) and (A2)-(i) on $\sigma$ and
$f$, we can infer that $\bp^{(u,v)}_{(t,x)}$ is a probability on $(\Omega,
\mathcal{F})$ (see Appendix A of N. El-Karoui and S. Hamad\`ene \cite{hamadene2003} or Karatzas-Shreve \cite{karatzas1991brownian}, pp.200). Then by Girsanov's theorem (Girsanov, \cite{girsanov1960transforming}), the process $B^{(u,v)}:=
(B_s-\int_0^s \sigma^{-1}(r,X^{t,x}_r)f(r, X^{t,x}_r, u_r, v_r) dr)_{s\leq T}$
is a $(\mathcal{F}_s, \bp^{(u,v)}_{(t,x)})$-Brownian motion and
$(X^{t,x}_s)_{s\leq T}$ satisfies the following stochastic differential
equation,
\begin{equation}\label{new equation of x}
dX^{t,x}_s\!=\!f(s, X^{t,x}_s, u_s, v_s)ds+ \sigma(s, X^{t,x}_s)dB_s^{(u,v)},\,\,\mbox{for }s\in [t,T] \mbox{ and }X^{t,x}_{s}= x
\mbox{ for }s<t.
\end{equation}
\noindent In general, the process $(X^{t,x}_s)_{s\leq T}$ is not
adapted with respect to the filtration generated by the Brownian motion
$(B_s^{(u,v)})_{s\leq T}$, therefore $(X^{t,x}_s)_{s\leq T}$ is called
a weak solution for the SDE (\ref{new equation of x}).
\medskip

Next let us fix $(t,x)$ to $(0,x_0)$ and for $i=1,2$, let us define the payoffs of the players by:
\begin{equation}\label{cost function}
J^i(u,v)= \be^{(u,v)}_{(0,x_0)}[\int_0^T h_i(s, X^{0,x_0}_s, u_s, v_s)ds+
g^i(X^{0,x_0}_T)],
\end{equation}
\noindent where $\be^{(u, v)}_{(0,x_0)}(\cdot)$ is the expectation under the
probability $\bp^{(u,v)}_{(0,x_0)}$. Hereafter $\be^{(u, v)}_{(0,x_0)}(\cdot)$ (resp. $\bp^{(u,v)}_{(0,x_0)}$) will be simply denoted by 
$\be^{(u, v)}(.)$ (resp. $\bp^{(u,v)}$). \\

Our problem is to find an admissible control $(u^*, v^*)$ such that
\begin{equation*}
J^1(u^*, v^*)\leq J^1(u, v^*) \mbox{ and }
J^2(u^*, v^*)\leq J^2(u^*,
v) \,\,\mbox{ for any }(u,v)\in {\cal M}.
\end{equation*}
The control $(u^*, v^*)$ is called a \textit{Nash
equilibrium point} for the nonzero-sum stochastic differential game.
\medskip

Next we define the Hamiltonian functions $H_i$, $i=1,2$, of the game from $[0, T]\times \R^{2m}\times U_1\times U_2$ into $\R$ by:
\begin{equation*}
H_i(t, x, p, u, v)= p\sigma^{-1}(t,x)f(t, x, u, v)+ h_i(t, x, u, v),
\end{equation*}
and we introduce the following assumption (A3) called
the generalized \textit{Isaacs condition}.
\medskip

\noindent\textbf{Assumption (A3)}: Generalized Isaacs condition.\\
\\
(i) There exist two Borelian applications $u_1^*$, $u_2^*$ defined
  on $[0,T]\times \R^{3m}$, with values in $U_1$ and $U_2$, respectively, 
  such that for any $(t, x, p, q, u, v)\in [0, T]\times \R^{3m}\times
  U_1\times U_2$, we have:
  $$H_1^*(t,x,p,q)= H_1(t, x, p, u_1^*(t,x,p,q), u_2^*(t,x,p,q))\leq
  H_1(t,x,p,u,u_2^*(t,x,p,q))$$
and
  $$H_2^*(t,x,p,q)= H_2(t, x, q, u_1^*(t,x,p,q), u_2^*(t,x,p,q))\leq
  H_2(t,x,q,u_1^*(t,x,p,q), v).$$
  \medskip
\noindent (ii) the mapping $(p,q)\in \R^{2m}\longmapsto
(H_1^*,H_2^*)(t,x,p,q) \in \R$ is continuous for any fixed $(t,x)$.
$\Box$
\medskip

\begin{remark}This condition has been already considered by A. Friedman in \cite{friedman} for the same purpose as ours in this paper. But the treatment of the problem he used is the PDE approach. $\Box$

\end{remark}

In order to show that the game has a Nash equilibrium point, it is
enough to show that its associated BSDE, which is multi-dimensional
and with a continuous generator (see Theorem \ref{bsde we mainly proof} below) has a solution. Therefore the main objective of the next section is to study the connection between NZSDGs and BSDEs.

\section{Nonzero-sum differential game problem and BSDEs.}

Let $(t,x)\in \esp$ and $(\theta_s^{t,x})_{s\leq T}$ be the solution of
the following forward stochastic differential equation:
\begin{equation}\label{forward equation of x}
\left\{
\begin{aligned}
d\theta_s&= b(s, \theta_s)ds+ \sigma(s, \theta_s)dB_s, \qquad &s \in[t, T];\\
\theta_s&= x,  &s \in[0, t]\\
\end{aligned}
\right.
\end{equation}

\noindent where $\sigma:[0, T]\times \R^m\rightarrow \R^{m\times m}$
satisfies the Assumptions (A1) and $b$: $[0, T]\times
\R^m\rightarrow \R^m$ is a measurable function which satisfies the
following assumption:
\bigskip

\noindent \textbf{Assumption (A4)}: The function $b$ is uniformly Lipschitz w.r.t $x$ and of linear growth, \textit{i.e.}, there exist constants $C_6$ and $C_7$  such that:
  $$\forall t \in [0, T],\  \forall \ x, x^{\prime} \in \R^m,\
\abs{ b(t,x)-b(t, x^{\prime})} \leq C_6 \abs{x- x^{\prime}}\mbox{ and } \abs {b(t, x)} \leq C_7(1+ \abs{x}).$$

It is well-known that, under (A1) and (A4), the stochastic process $(\theta_s^{t,x})_{s\leq T}$ satisfies
the following estimate, see for example (Karatzas, I. 1991
\cite{karatzas1991brownian} pp.306):
\begin{equation}\label{estimate of X}
\forall \ q \in [1, \infty ), \qquad \be\Big[\big(\sup\limits_{s\leq
T}\abs{\theta_s^{t,x}}\big)^{2q}\Big]\leq C(1+\abs{x}^{2q}).
\end{equation}
As a particular case we have a similar estimate for the process $X^{t,x}$, i.e., 
\begin{equation}\label{estimate of X1}
\forall \ q \in [1, \infty ), \qquad \be\Big[\big(\sup\limits_{s\leq
T}\abs{X_s^{t,x}}\big)^{2q}\Big]\leq C(1+\abs{x}^{2q}).
\end{equation}

Finally note that we have also a similar estimate for weak solutions of SDEs of types (\ref{new equation of x}), i.e., if $(u,v)$ belongs to $\mathcal{M}$ then 
\begin{equation}\label{estimate of X2}
\forall \ q \in [1, \infty ), \qquad \be^{(u,v)}_{(t,x)}\Big[\big(\sup\limits_{s\leq
T}\abs{X_s^{t,x}}\big)^{2q}\Big]\leq C(1+\abs{x}^{2q}).
\end{equation}

Next let us recall the following result by U.G.Haussmann
\cite{haussmann1986stochastic} related to integrability of the exponential local martingale  defined in (\ref{density function}).
\begin{lemma}\label{density function lp bounded}
Assume (A1)-(i),(ii) and (A4) and let $(\theta^{t,x}_s)_{s\leq T}$ be the solution of
(\ref{forward equation of x}). Let $\varphi$ be a $\mathcal{P}\otimes \mathcal{B}(\R^m)$-measurable application from $[0,T]\times \Omega \times \R^m$ to $\R^m$ which is uniformly of linear growth, that is, $\bp$-a.s., $\forall (s,x)\in \esp$, 
\begin{equation*}\abs{\varphi(s,\omega,x)}\leq
C_8(1+\abs{x}).
\end{equation*}
\noindent Then, there exists $p\in (1,2)$ and a constant $C$, where
$p$ depends only on $C_2$, $C_6$, $C_7$, $C_8$, $m$ while the constant $C$,
depends only on $m$ and $p$, but not on $\varphi$, such that:
\begin{equation*}
\be\left[\big(\zeta_T (\varphi(s, \theta^{t,x}_s))\big)^p\right] \leq
C,
\end{equation*}
\noindent where the process $\zeta(\varphi(s, \theta^{t,x}_s))$ is the density function defined in (\ref{density
function}).
\end{lemma}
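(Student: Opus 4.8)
The plan is to estimate $\be[(\zeta_T(\eta))^p]$, with $\eta_s:=\varphi(s,\theta^{t,x}_s)$ (so $|\eta_s|\le C_8(1+|\theta^{t,x}_s|)$), not by a direct H\"older argument but by changing measure with the density $\zeta_T(\eta)$ itself. Under (A1)-(i),(ii) and (A4), $\varphi$ being of linear growth, $\zeta(\eta)$ is a true $\bP$-martingale (the very criterion recalled right after (\ref{density function}); see also Appendix A of \cite{hamadene2003}), so $d\bP^\varphi:=\zeta_T(\eta)\,d\bP$ is a probability, $\widetilde B:=B-\int_0^\cdot\eta_r\,dr$ is a $\bP^\varphi$-Brownian motion, and $\be[(\zeta_T(\eta))^p]=\be^\varphi[(\zeta_T(\eta))^{p-1}]$. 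Rewriting $\int_0^T\eta_r\,dB_r=\int_0^T\eta_r\,d\widetilde B_r+\int_0^T|\eta_r|^2\,dr$ turns $(\zeta_T(\eta))^{p-1}$ into $\exp\big((p-1)\int_0^T\eta_r\,d\widetilde B_r+\tfrac{p-1}{2}\int_0^T|\eta_r|^2\,dr\big)$, and then a completing-the-square step together with Cauchy--Schwarz in $\bP^\varphi$ (using that $\exp\big(c\int_0^\cdot\eta_r\,d\widetilde B_r-\tfrac{c^2}{2}\int_0^\cdot|\eta_r|^2\,dr\big)$ is a $\bP^\varphi$-supermartingale for every $c$) gives $\be[(\zeta_T(\eta))^p]\le\big(\be^\varphi\big[\exp\big((p-1)(2p-1)\int_0^T|\eta_r|^2\,dr\big)\big]\big)^{1/2}$. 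The whole point of this detour is that the coefficient $(p-1)(2p-1)$ in front of $\int|\eta|^2$ tends to $0$ as $p\downarrow1$; completing the square directly under $\bP$ and then applying H\"older would instead leave a coefficient bounded away from $0$ (at least $\tfrac12$, even at $p=1$), which is fatal because $\int_0^T|\theta^{t,x}_r|^2\,dr$ is exponentially integrable only up to a finite rate.

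So it remains to show $\be^\varphi\big[\exp\big(\mu\int_0^T|\theta^{t,x}_r|^2\,dr\big)\big]<\infty$ for every $\mu$ below some $\mu^*>0$ that depends only on the data, uniformly over $\varphi$ of linear growth $\le C_8$; combined with $|\eta_r|^2\le 2C_8^2(1+|\theta^{t,x}_r|^2)$ this finishes the proof once $p\in(1,2)$ is chosen close enough to $1$ that $2(p-1)(2p-1)C_8^2<\mu^*$. Under $\bP^\varphi$ the process $\theta^{t,x}$ solves an SDE with the same bounded diffusion coefficient $\sigma$ and drift $b+\sigma\eta$, which is still of linear growth (constant $\le C_7+C_2C_8$); thus the point is just that a diffusion with bounded diffusion coefficient and linear-growth drift has a finite exponential moment of $\int_0^T|\cdot|^2$. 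I would get this from the pathwise estimate $|\theta^{t,x}_s|\le|x|+(C_7+C_2C_8)\int_t^s(1+|\theta^{t,x}_r|)\,dr+\widetilde M^*$, where $\widetilde M^*:=\sup_{u\le T}\big|\int_t^{u\vee t}\sigma(r,\theta^{t,x}_r)\,d\widetilde B_r\big|$, followed by Gronwall's inequality --- which yields $\int_0^T|\theta^{t,x}_r|^2\,dr\le c_1(1+|x|^2)+c_2(\widetilde M^*)^2$ with $c_1,c_2$ depending only on $C_2,C_6,C_7,C_8,m,T$ --- and then a Gaussian-type tail bound for $\widetilde M^*$.

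For that last tail bound, $\widetilde M$ is a continuous $\bP^\varphi$-martingale with bracket $\langle\widetilde M\rangle_T\le C_2^2mT$, so by Dambis--Dubins--Schwarz each coordinate's running maximum is dominated by the running maximum of a Brownian motion run for a time $\le C_2^2mT$; the reflection principle then gives $\be^\varphi\big[\exp\big(\nu(\widetilde M^*)^2\big)\big]<\infty$ for $\nu$ below a threshold depending only on $C_2^2mT$ and $m$, and combining the $m$ coordinates via H\"older keeps the bound uniform in $\varphi$. Tracking all the constraints back fixes the admissible range of $p\in(1,2)$ --- depending only on $C_2,C_6,C_7,C_8,m$ --- and, every estimate above being uniform over $\varphi$ of growth $\le C_8$, yields a finite constant $C$ independent of $\varphi$. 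I expect the one genuinely delicate point to be the first step: realising that the H\"older loss must be absorbed in the tilted measure $\bP^\varphi$, where it becomes a coefficient of order $p-1$, rather than in $\bP$, where it cannot be made small; everything afterwards (Gronwall, reflection-principle tail estimates, bookkeeping of constants) is routine. Alternatively, one may simply quote Haussmann's estimate from \cite{haussmann1986stochastic}.
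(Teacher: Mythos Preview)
The paper does not prove this lemma at all: it is stated as a quotation of Haussmann \cite{haussmann1986stochastic}, with no argument given. Your proposal is therefore more than the paper offers --- you reconstruct (a version of) Haussmann's proof rather than merely citing it, and you correctly identify at the end that the citation alone suffices.

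Your sketch is sound. The key idea --- tilting first by $\zeta_T(\eta)$ so that the Cauchy--Schwarz loss produces a coefficient $(p-1)(2p-1)\to 0$ as $p\downarrow 1$, rather than working under $\bP$ where the analogous coefficient stays near $1$ --- is exactly the trick that makes the estimate go through, and your explanation of why the naive approach fails is accurate. The remaining steps (Girsanov to view $\theta^{t,x}$ under $\bP^\varphi$ as a diffusion with bounded $\sigma$ and linear-growth drift; Gronwall to reduce to an exponential moment of $(\widetilde M^*)^2$; DDS plus reflection to bound that moment) are routine and correctly outlined. One small remark on bookkeeping: your final bound carries a factor $\exp(\mu\,c_1(1+|x|^2))$, so the constant $C$ you obtain depends on $|x|$ (and on $T$), whereas the paper's statement asserts $C$ depends only on $m$ and $p$. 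This appears to be an imprecision in the lemma as stated rather than a defect in your argument; in every application in the paper the starting point is fixed, so the dependence is harmless.
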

As a by-product we have:
\begin{corollary}\label{corol lp} Let $(u,v)$ be an admissible control for the players and $(t,x)\in \esp$. Then there exists $p>1$ such that  
\begin{equation}
\be\left[\big(\zeta_T(\sigma(s,X^{t,x}_s)^{-1}f(s,X^{t,x}_s,u_s,v_s))\big)^p\right] \leq
C. 
\end{equation}
\end{corollary}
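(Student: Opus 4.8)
The plan is to read the corollary off directly from Lemma~\ref{density function lp bounded}, by choosing the random field appropriately and by treating $X^{t,x}$ as the solution of the forward equation (\ref{forward equation of x}) with null drift. First I would set, for $(s,\omega,x)\in[0,T]\times\Omega\times\R^m$,
$$\varphi(s,\omega,x):=\sigma^{-1}(s,x)\,f(s,x,u_s(\omega),v_s(\omega)).$$
Since $\sigma^{-1}$ and $f$ are Borelian in their arguments and $(u,v)\in\cM$ is $\cP$-measurable with values in the compact metric spaces $U_1\times U_2$, the composed field $\varphi$ is $\cP\otimes\mathcal{B}(\R^m)$-measurable; moreover, by Assumptions (A1)-(ii) and (A2)-(i),
$$|\varphi(s,\omega,x)|\leq|\sigma^{-1}(s,x)|\,|f(s,x,u_s(\omega),v_s(\omega))|\leq C_2C_3(1+|x|),$$
so that $\varphi$ is uniformly of linear growth with constant $C_8:=C_2C_3$, this bound being valid uniformly in $(u,v)$.

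Next I would observe that $X^{t,x}$ is precisely the solution of (\ref{forward equation of x}) with $b\equiv 0$, and a null drift trivially satisfies Assumption (A4) (take $C_6=C_7=0$). Hence Lemma~\ref{density function lp bounded} applies with $\theta^{t,x}=X^{t,x}$ and the field $\varphi$ above: there exist $p\in(1,2)$ depending only on $C_2$, $C_8$ and $m$, and a constant $C$ depending only on $m$ and $p$, such that
$$\be\big[\big(\zeta_T(\varphi(s,X^{t,x}_s))\big)^p\big]\leq C.$$
Since $\varphi(s,X^{t,x}_s)=\sigma^{-1}(s,X^{t,x}_s)f(s,X^{t,x}_s,u_s,v_s)$, this is exactly the asserted estimate, and both $p$ and $C$ can be taken independent of the admissible control $(u,v)\in\cM$.

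There is no genuine difficulty here: the single point that deserves a line of care is the joint measurability and the uniform linear growth of the composed field $\varphi$, after which the corollary is a direct specialization of Lemma~\ref{density function lp bounded} to the case of a driftless forward SDE. The only feature worth recording explicitly is that the exponent $p$ and the constant $C$ furnished by the lemma do not depend on $\varphi$, hence not on the pair $(u,v)$, a uniformity that will be used later when the integrability of $\zeta_T$ is needed simultaneously over all of $\cM$.
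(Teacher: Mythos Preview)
Your proposal is correct and is precisely the argument the paper has in mind: the corollary is stated there merely as ``a by-product'' of Lemma~\ref{density function lp bounded} with no further proof, so the intended reasoning is exactly to specialize that lemma to $\theta^{t,x}=X^{t,x}$ (i.e.\ $b\equiv 0$, which trivially satisfies (A4)) and to the random field $\varphi(s,\omega,x)=\sigma^{-1}(s,x)f(s,x,u_s(\omega),v_s(\omega))$. Your explicit verification of the $\cP\otimes\mathcal{B}(\R^m)$-measurability and of the uniform linear growth with $C_8=C_2C_3$, together with the remark that $p$ and $C$ are independent of $(u,v)$, is exactly what the paper leaves implicit.
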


Next we give a preliminary result which characterizes $J(u,v)$ of
(\ref{cost function}) from its associated BSDE. This result
generalizes the one by S. Hamad\`ene, and J. P. Lepeltier, 1995b
\cite{hamadene1995backward} (Theorem I.3). The main improvement is that
the drift of the diffusion, weak solution of (\ref{new equation of x}), is not bounded anymore but is instead of
linear growth.
\medskip

\begin{proposition}\label{Lemma BSDE solution existence wrt uv} Assume that Assumptions (A1), and (A2) on 
$f$, $h_i$, $g^i$, $i=1,2$, are fulfilled. Then for any pair $(u,
v)\in \cM$, there exists a pair of $\cP$-measurable processes $(W^{i,(u,v)}, Z^{i,(u,v)})$, $i=1,2$, with values in $\R\times
\R^m$ such that:
\begin{enumerate}
  \item[(i)] For any $q\geq 1$ and $i=1,2$, we have,
  \begin{equation}\label{estimate of w1uv and z1uv under puv}
  \be^{(u,v)}\Big[\sup_{0\leq s\leq T}|W_s^{i,(u,v)}|^q+\big(\int_0^T|Z_s^{i,(u,v)}|^2ds\big)^{\frac{q}{2}}\Big]<\infty.
  \end{equation}
\item[(ii)] For $t\leq T$, \begin{equation}\label{BSDE uv}
   \quad W_t^{i,(u,v)}= g^i(X_T^{0,x_0})+\int_t^T H_i(s, X_s^{0,x_0}, Z_s^{i,(u,v)}, u_s, v_s)ds-\int_t^T
  Z_s^{i,(u,v)}dB_s.\end{equation}
\end{enumerate}

\noindent The solution of BSDE (\ref{estimate of w1uv and z1uv under puv})-(\ref{BSDE uv}) is unique, moreover, $W_0^{i,(u,v)}= J^i(u,v)$ for $i=1,2$.
\end{proposition}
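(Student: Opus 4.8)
A natural approach is to observe that, viewed as a BSDE under $\bP$, equation \eqref{BSDE uv} has a generator
$$H_i(s,X^{0,x_0}_s,z,u_s,v_s)=z\,\sigma^{-1}(s,X^{0,x_0}_s)f(s,X^{0,x_0}_s,u_s,v_s)+h_i(s,X^{0,x_0}_s,u_s,v_s)$$
that is \emph{affine} in $z$, so that a Girsanov change of probability cancels the $z$-linear part altogether. Writing $\phi_s:=\sigma^{-1}(s,X^{0,x_0}_s)f(s,X^{0,x_0}_s,u_s,v_s)$ and recalling from Section~\ref{section of setting of the problem} that $B^{(u,v)}_\cdot=B_\cdot-\int_0^\cdot\phi_s\,ds$ is a $(\cF_s,\bP^{(u,v)})$-Brownian motion, substituting $dB_s=dB_s^{(u,v)}+\phi_s\,ds$ shows that, for a $\cP$-measurable pair $(W^{i,(u,v)},Z^{i,(u,v)})$ satisfying \eqref{estimate of w1uv and z1uv under puv}, equation \eqref{BSDE uv} is equivalent to
$$W^{i,(u,v)}_t=g^i(X^{0,x_0}_T)+\int_t^T h_i(s,X^{0,x_0}_s,u_s,v_s)\,ds-\int_t^T Z^{i,(u,v)}_s\,dB^{(u,v)}_s,\qquad t\le T.$$
This is a BSDE whose generator no longer depends on $(W,Z)$, and I would solve it explicitly.

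First I would set $\xi^i:=g^i(X^{0,x_0}_T)+\int_0^T h_i(s,X^{0,x_0}_s,u_s,v_s)\,ds$. By Assumptions (A2)(ii)--(iii) one has $|\xi^i|\le C\,(1+\sup_{s\le T}|X^{0,x_0}_s|^{\gamma})$, so $\be^{(u,v)}[\,|\xi^i|^q\,]<\infty$ for every $q\ge1$ by the estimate \eqref{estimate of X2}. I would then introduce the $(\cF_t,\bP^{(u,v)})$-martingale $M^i_t:=\be^{(u,v)}[\xi^i\mid\cF_t]$ and set $W^{i,(u,v)}_t:=M^i_t-\int_0^t h_i(s,X^{0,x_0}_s,u_s,v_s)\,ds$; this immediately gives $W^{i,(u,v)}_T=g^i(X^{0,x_0}_T)$ and, as $\cF_0$ is trivial, $W^{i,(u,v)}_0=M^i_0=\be^{(u,v)}[\xi^i]=J^i(u,v)$ by \eqref{cost function}. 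To extract $Z^{i,(u,v)}$ I would use that the predictable representation property carries over to $\bP^{(u,v)}$ and $B^{(u,v)}$: apply It\^o's representation theorem \emph{under $\bP$} to the $\bP$-martingale $\zeta_\cdot M^i_\cdot$ (with $\zeta$ the density process of \eqref{defzeta}) to write $\zeta_t M^i_t=M^i_0+\int_0^t K_s\,dB_s$, then expand $M^i_t=(\zeta_t M^i_t)\,\zeta_t^{-1}$ by It\^o's formula using $d\zeta_t^{-1}=-\zeta_t^{-1}\phi_t\,dB^{(u,v)}_t$; the finite-variation terms cancel because $M^i$ is a $\bP^{(u,v)}$-martingale, leaving $M^i_t=M^i_0+\int_0^t Z^{i,(u,v)}_s\,dB^{(u,v)}_s$ with $Z^{i,(u,v)}_s:=\zeta_s^{-1}K_s-M^i_s\phi_s$ a $\cP$-measurable process. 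Inserting this in the definition of $W^{i,(u,v)}$ and rewriting $dB^{(u,v)}$ in terms of $dB$ recovers \eqref{BSDE uv} exactly. Finally, \eqref{estimate of w1uv and z1uv under puv} follows from Doob's $L^q$ inequality for the $W$-part and the Burkholder--Davis--Gundy inequality for the $Z$-part, both applied to the $\bP^{(u,v)}$-martingale $M^i$, using $\xi^i\in L^q(\bP^{(u,v)})$ for all $q\ge1$.

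For uniqueness, if $(W,Z)$ and $(W',Z')$ both satisfy \eqref{estimate of w1uv and z1uv under puv}--\eqref{BSDE uv}, subtracting and cancelling the affine term yields $W_t-W'_t=-\int_t^T(Z_s-Z'_s)\,dB^{(u,v)}_s$; by \eqref{estimate of w1uv and z1uv under puv} with $q=1$ and BDG this is a genuine $\bP^{(u,v)}$-martingale, so taking $\be^{(u,v)}[\,\cdot\mid\cF_t\,]$ forces $W\equiv W'$, after which $\int_0^T(Z_s-Z'_s)\,dB^{(u,v)}_s=0$ together with $\be^{(u,v)}[\int_0^T|Z_s-Z'_s|^2ds]<\infty$ (from \eqref{estimate of w1uv and z1uv under puv} with $q=2$) gives $Z=Z'$, $d\bP^{(u,v)}\otimes dt$-a.e., hence $d\bP\otimes dt$-a.e.

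The only step that is not routine is the transfer of the martingale representation property to $(\bP^{(u,v)},B^{(u,v)})$: the filtration $(\cF_t)$ is generated by $B$, not by $B^{(u,v)}$ (indeed $X^{0,x_0}$ is only a \emph{weak} solution of \eqref{new equation of x}), so $M^i$ cannot be represented against $B^{(u,v)}$ directly and one must pass through $\zeta M^i$ as above, or appeal to the general invariance of the predictable representation property under locally equivalent changes of probability. Everything else is an application of the moment bound \eqref{estimate of X2} and of classical martingale inequalities.
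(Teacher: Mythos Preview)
Your proof is correct and follows essentially the same route as the paper: define $W^{i,(u,v)}_t$ as the $\bP^{(u,v)}$-conditional expectation of $\xi^i$, transfer the martingale representation from $\bP$ via the density process $\zeta$ and It\^o's formula to identify $Z^{i,(u,v)}$, and recover \eqref{BSDE uv} by rewriting $dB^{(u,v)}$ in terms of $dB$. The only notable difference is that you obtain the $Z$-estimate directly from Burkholder--Davis--Gundy applied to the $\bP^{(u,v)}$-martingale $M^i$ (together with Doob for $\sup_t|M^i_t|^q$), whereas the paper derives it by applying It\^o's formula to $|W^{i,(u,v)}|^2$ with a localization; your argument is slightly shorter but equivalent, and your explicit treatment of uniqueness fills in what the paper only asserts.
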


\begin{proof} We will give the proof for $i=1$ and of course it is similar for $i=2$. So for $(u,v)\in \m$, the process $(X_s^{0,x_0})_{s\leq T}$ is a weak solution of the
following stochastic differential equation:
\begin{equation*}
dX_s^{0,x_0}=f(s, X_s^{0,x_0}, u_s, v_s)ds+ \sigma(s,
X_s^{0,x_0})dB_s^{(u, v)}, \quad s\leq T \ \text{and}\  X_0^{0,x_0}= x_0,
\end{equation*}
\noindent where $B^{(u,v)}$ is a Brownian motion under $\bp^{(u,v)}$. Let us define the process $(W_t^{1,(u,v)})_{t\leq T}$ as follows:
\begin{equation}\label{define of w^uv}
\forall t\leq T,\,\,W_t^{1,(u,v)}\triangleq \be^{(u,
v)}\Big[g^1(X_T^{0,x_0})+\int_t^Th_1(s, X_s^{0,x_0}, u_s,
v_s)ds\mid \mathcal{F}_t\Big].
\end{equation}

Since the functions $g^1(x)$ and $h_1(s,x,u,v)$ are of
polynomial growth w.r.t $x$, we have, for any $r\geq 1$,
\begin{eqnarray}\label{W_uv is well defined}
&& \quad \be^{(u,v)}\Big[\abs{g^1(X_T^{0,x_0})+\int_0^T\abs{h_1(s,
X_s^{0,x_0},u_s, v_s)}ds}^{2r}\Big]\nonumber\\ && \qquad \leq
\be^{(u,v)}\Big[C\big(1+\sup_{s\leq T}\abs{X_s^{0,x_0}}^{2\gamma r}\big)\Big]\nonumber\\
&& \qquad \leq C(1+\abs{x_0}^{2\gamma r}).
\end{eqnarray}

\noindent The last inequality is due to estimate (\ref{estimate of X2}). Then (\ref{W_uv is well defined}) implies that the process
$(W_t^{1,(u,v)})_{t\leq T}$ is well defined.\medskip

\noindent Next for notation simplicity for any $t\leq T$, we denote by $\zeta_t$
the density function\\ $\zeta_t\left(\sigma^{-1}(s, X_s^{0,x_0})f(s, X_s^{0,x_0},u_s,
v_s)\right)$. Therefore,
\begin{align}\label{w_uv 3.9}
W_t^{1,(u,v)}&=(\zeta_t)^{-1}
\be \Big[\zeta_T\cdot\big(g^1(X_T^{0,x_0})+\int_t^Th_1(s,
X_s^{0,x_0}, u_s, v_s)ds\big)\mid \mathcal{F}_t\Big]\nonumber\\
&=(\zeta_t)^{-1}
\be\Big[\zeta_T\cdot\big(g^1(X_T^{0,x_0})+\int_0^Th_1(s, X_s^{0,x_0},
u_s, v_s)ds\big)\mid \mathcal{F}_t\Big]\\
&\quad -\int_0^th_1(s,
X_s^{0,x_0}, u_s, v_s)ds. \nonumber
\end{align}
\noindent By Corollary \ref{corol lp}, there exists
some $1<p_0<2$, such that $\zeta_T \in L^{p_0}(\R)$. Therefore, from Young's
inequality, we obtain, for any $1< \bar{q} <p_0$,
\begin{align*}
&\quad \be \Big[\Big|\zeta_T\cdot \big(g^1(X_T^{0,x_0})+\int_0^T
h_1(s, X_s^{0,x_0},u_s,v_s)ds\big)\Big|^{\bar{q}}\Big]\\
&\leq \frac{\bar{q}}{p_0}\be\left[\abs{\zeta_T}^{p_0}\right]+
\frac{p_0-\bar{q}}{p_0}\be\Big[\abs{g^1(X_T^{0,x_0})+\int_0^Th_1(s,
X_s^{0,x_0},u_s,v_s)ds}^{\bar{q}\cdot\frac{p_0}{p_0-\bar{q}}}\Big].
\end{align*}

\noindent Since $\frac{p_0}{\bar{q}}<2$, its conjugate
$\frac{p_0}{p_0-\bar{q}}>2$. Therefore, by the polynomial growth
assumptions of $g^1$ and $h_1$ w.r.t $x$ (Assumption
(A2)-(ii),(iii)) and the estimate (\ref{estimate of X1}), we have
\begin{equation*}
\be\Big[\abs{g^1(X_T^{0,x_0})+\int_0^Th_1(s,
X_s^{0,x_0},u_s,v_s)ds}^{\bar{q}\cdot\frac{p_0}{p_0-\bar{q}}}\Big]< \infty.
\end{equation*}
\noindent Then,
\begin{equation*}
\zeta_T\cdot (g^1(X_T^{0,x_0})+\int_0^T h_1(s, X_s^{0,x_0},u_s,
v_s)ds)\in L^{\bar{q}}.
\end{equation*}

\noindent Therefore thanks to the representation theorem, there exists a $\cP$-measurable and 
$\R^m$-valued process, 
$(\bar \theta_s)_{s\leq T}$ which satisfies,
\begin{equation*}
\be\Big[\big(\int_0^T
\left|\bar \theta_s\right|^2ds\big)^{\frac{\bar{q}}{2}}\Big]< \infty,
\end{equation*}
\noindent such that for any $t\le T$,
\begin{align*}
W_t^{1,(u,v)}&\!=\!
(\zeta_t)^{-1}\cdot\{\be \Big[\zeta_T\cdot\big(g^1(X_T^{0,x_0})+\int_0^T
h_1(s, X_s^{0,x_0},u_s, v_s)ds\big)\Big]+ \int_0^t
\bar \theta_sdB_s\}\\
&\quad -\int_0^th_1(s,
X_s^{0,x_0}, u_s, v_s)ds\\
&\triangleq (\zeta_t)^{-1}R_t-\int_0^th_1(s, X_s^{0,x_0},u_s, v_s)ds,
\end{align*}
\noindent where $$R_t\triangleq
\be \Big[\zeta_T\cdot\big(g^1(X_T^{0,x_0})+\int_0^T h_1(s,
X_s^{0,x_0},u_s, v_s)\big)\Big]+ \int_0^t \bar \theta_sdB_s, \quad
t\leq T.$$
But for any $s\le T$,
\begin{equation*}
d\zeta_s=\zeta_s\sigma^{-1}(s,X_s^{0,x_0})f(s,X_s^{0,x_0},u_s,v_s)dB_s.
\end{equation*}
Then by It\^o's formula we have
\begin{align*}
d(\zeta_s)^{-1}&=-(\zeta_s)^{-1}\cdot\Big\{\sigma^{-1}\left(s,X_s^{0,x_0}\right)f\left(s,X_s^{0,x_0},u_s,v_s\right)dB_s-\\
&\qquad|\sigma^{-1}\left(s,X_s^{0,x_0}\right)f\left(s,X_s^{0,x_0},u_s,v_s\right)|^2ds\Big\}, \quad s\leq T.
\end{align*}

\noindent Therefore, $\forall s\leq T$, 
\begin{align*}
dW_s^{1,(u,v)} &=-(\zeta_s)^{-1}\cdot \Big\{\sigma^{-1}(s,X_s^{0,x_0})f(s,X_s^{0,x_0},u_s,v_s)dB_s\\ &\quad -|\sigma^{-1}(s,X_s^{0,x_0})f(s,X_s^{0,x_0},u_s,v_s)|^2ds\Big\}R_s\\
&\quad +(\zeta_s)^{-1}\bar\theta_sdB_s+\big(-(\zeta_s)^{-1}\sigma^{-1}(s,X_s^{0,x_0})f(s,X_s^{0,x_0},u_s,v_s)\big)\bar\theta_sds\\&\quad -h_1(s,X_s^{0,x_0},u_s,v_s)ds.
\end{align*}

\noindent Next let us define
\begin{equation}\label{define of z^uv}
Z_s^{1,(u,v)}\triangleq
-(\zeta_s)^{-1}\Big\{R_s\sigma^{-1}(s,X_s^{0,x_0})f(s,X_s^{0,x_0},u_s,v_s)-\bar \theta_s\Big\}, \,s\leq T.
\end{equation}
Then it is easy to check that the pair of processes $(W_s^{1,(u,v)}, Z_s^{1,(u,v)})_{s\leq T}$ of (\ref{define of w^uv})-(\ref{define of z^uv}) satisfies the backward equation (\ref{BSDE uv}).
\medskip

We now focus on the estimates for the processes $(W_s^{1,(u,v)},
Z_s^{1,(u,v)})_{s\leq T}$. From (\ref{define of w^uv}), for any
$s\leq T$ and $q>1$,
\begin{equation*}
\sup_{0\leq t\leq T}\abs{W_t^{1,(u,v)}}^q = \sup_{0\leq t\leq
T}|\be^{(u,v)}\Big[g^1(X_T^{0,x_0})+\int_t^Th_1\left(s,
X_s^{0,x_0},u_s,v_s\right)ds\Big|\mathcal{F}_t\Big]|^q.
\end{equation*}

\noindent Then by conditional Jensen's inequality we have, 
\begin{equation*}
\sup_{0\leq t\leq T}\abs{W_t^{1,(u,v)}}^q \leq
\be^{(u,v)}\Big[\sup_{0\leq t\leq
T}\abs{g^1(X_T^{0,x_0})+\int_t^Th_1\left(s,X_s^{0,x_0},u_s,v_s\right)ds}^q\Big|\mathcal{F}_t\Big].
\end{equation*}
\noindent Therefore, since $g^1$ and $h_1$ are of polynomial growth, we have
\begin{align}\label{estimate of w1uv under puv}
\be^{(u,v)}\!\Big[\!\sup_{0\leq t\leq T}\abs{W_t^{1,(u,v)}}^q\Big]
&\leq\! \be^{(u,v)}\!\Big[\sup_{0\leq t\leq
T}\abs{g^1(X_T^{0,x_0})+\int_t^Th_1\left(s,X_s^{0,x_0},u_s,v_s\right)ds}^q\Big]\nonumber\\
&\leq \!C\be^{(u,v)}\!\Big[\sup_{0\leq t\leq
T}(1+\abs{X_t^{0,x_0}}^{\gamma q})\Big]\leq C(1+\abs{x_0}^{\gamma q})<\infty.
\end{align}
\noindent Next for each integer $k$, let us define:
$$\tau_k=\inf\{s\geq 0, \int_0^s |Z_s^{1,(u,v)}|^2ds\geq k\}\wedge T.$$
\noindent The sequence $(\tau_k)_k\geq 0$ is of stationary type and converges to $T$. By using It\^o's formula with $(W_{t\wedge\tau_k}^{1,(u,v)})^2$ we obtain: $\forall t\leq T$, 
\begin{align*}
|&W_{t\wedge\tau_k}^{1,(u,v)}|^2+\int_{t\wedge\tau_k}^{\tau_k}\abs{Z_s^{1,(u,v)}}^2ds\\
&= \!|W_{\tau_k}^{1,(u,v)}|^2\!+\!2\int_{t\wedge\tau_k}^{\tau_k}W_s^{1,(u,v)}h_1(s,
X_s^{0,x_0},u_s,v_s)ds\!-\!2\int_{t\wedge\tau_k}^{\tau_k}W_s^{1,(u,v)}Z_s^{1,(u,v)}dB_s^{(u,v)}.
\end{align*}

\noindent Thus, for $q>1$, taking the expectation of the power $\frac{q}{2}$ of the above equation on both sides and applying Young's
inequality, we see that there exists a constant $\underbar C$ such that, 
\begin{eqnarray}\label{3.19}
\be^{(u,v)}\Big[\big(\int_0^{\tau_k}\abs{Z_s^{1,(u,v)}}^2ds\big)^{\frac{q}{2}}\!\Big]
\!&\leq&\!
\underbar C\Big\{\!\be^{(u,v)}\!\left[|W_{\tau_k}^{1,(u,v)}|^q\right]\!+\!\be^{(u,v)}\!\Big[\!\big(\int_0^{\tau_k}\!\!\!\abs{W_s^{1,(u,v)}}^2ds\big)^{\frac{q}{2}}\Big]\nonumber\\
&\quad&
+\be^{(u,v)}\Big[\big(\int_0^{\tau_k}\abs{h_1(s,X_s^{0,x_0},u_s,v_s)}^2ds\big)^{\frac{q}{2}}\Big]\nonumber\\
&\quad& +\be^{(u,v)}\Big[\big|\int_0^{\tau_k}W_s^{1,(u,v)}Z_s^{1,(u,v)}dB^{(u,v)}_s\big|^{\frac{q}{2}}\Big]\Big\}.
\end{eqnarray}
\noindent Next taking into account the Assumptions (A2)-(ii) and estimate (\ref{estimate of w1uv under puv}), one can show that, 
\begin{align*}
&\be^{(u,v)}\Big[|W_{\tau_k}^{1,(u,v)}|^q\Big]+\be^{(u,v)}\Big[\big(\int_0^{\tau_k}\abs{W_s^{1,(u,v)}}^2ds\big)^{\frac{q}{2}}\Big]\\
&\qquad \qquad \qquad \qquad \qquad \qquad \qquad \qquad+\be^{(u,v)}\!\Big[\big(\int_0^{\tau_k}\abs{h_1(s,X_s^{0,x_0},u_s,v_s)}^2ds\big)^{\frac{q}{2}}\Big]\\
&\leq \bar C\Big \{\be^{(u,v)}\Big[\sup_{0\leq s\leq
\tau_k}\abs{W_s^{1,(u,v)}}^q\Big]+\be^{(u,v)}\Big[\sup_{0\leq
s\leq \tau_k}\big(1+\abs{X_s^{0,x_0}}^{2\gamma}\big)^{\frac{q}{2}}\Big]\Big \}\\
&\leq \bar C \Big\{\be^{(u,v)}\Big[\sup_{0\leq s\leq
T}\abs{W_s^{1,(u,v)}}^q\Big]+\{\be^{(u,v)}\Big[\sup_{0\leq s\leq
T}\left(1+\abs{X_s^{0,x_0}}^{2\gamma
q}\right)\Big]\}^{\frac{1}{2}}\Big\}<\infty.
\end{align*}
\noindent Meanwhile, it follows from the Burkholder-Davis-Gundy (BDG for short) that there exists a constant $C_q$, depending on $q$, such that
\begin{align*}
\be^{(u,v)}&\Big[\big|\int_0^{\tau_k}W_s^{1,(u,v)}Z_s^{1,(u,v)}dB^{(u,v)}_s\big|^{\frac{q}{2}}\Big]\\
&\leq C_q\be^{(u,v)}\Big[\big(\int_0^{\tau_k}\abs{W_s^{1,(u,v)}}^2\abs{Z_s^{1,(u,v)}}^2ds\big)^{\frac{q}{4}}\Big]\\
&\leq C_q\be^{(u,v)}\Big[\big(\sup_{0\leq s\leq \tau_k}\abs{W_s^{1,(u,v)}}\big)^{\frac{q}{2}}\big(\int_0^{\tau_k}\abs{Z_s^{1,(u,v)}}^2ds\big)^{\frac{q}{4}}\Big]\\
&\leq \frac{C_q^2\underbar{C}}{2}\be^{(u,v)}\Big[\big(\sup_{0\leq s\leq
T}\abs{W_s^{1,(u,v)}}\big)^{q}\Big]+\frac{1}{2\underbar{C}}\be^{(u,v)}\Big[\big(\int_0^{\tau_k}\abs{Z_s^{1,(u,v)}}^2ds\big)^{\frac{q}{2}}\Big],
\end{align*}
where $\underbar C$ is the one of (\ref{3.19}). Going back now to (\ref{3.19}) and using Fatou's Lemma, we conclude that for any $q>1$,
\begin{equation}\label{estimate of 1zuv under puv}
\be^{(u,v)}\Big[\big(\int_0^T\abs{Z_s^{1,(u,v)}}^2ds\big)^{\frac{q}{2}}\Big]<\infty.
\end{equation}
\noindent Estimates (\ref{estimate of w1uv under puv}) and
(\ref{estimate of 1zuv under puv}) yield to the conclusion
(\ref{estimate of w1uv and z1uv under puv}).

Finally note that, taking $t=0$ in (\ref{define of w^uv}) we obtain $W^{1,(u,v)}_0=J^1(u,v)$ since $ {\cal F}_0$ contains only $\bp$ and $\bp^{(u,v)}$ null sets since those probabilities are equivalent. 
\end{proof}

\begin{theorem}\label{bsde we mainly proof} Let us assume that:

\no (i) The Assumptions (A1), (A2) and (A3) are fulfilled ;

\no (ii) There exist two deterministic functions $\varpi^i(t,x)$, $i=1,2,$ with polynomial growth and two pairs of $\cP$-measurable processes $(W^i,Z^i)$, $i=1,2$, with values in $\R^{1+m}$ such that: For $i=1,2$,

(a) $\bp$-a.s., $\forall s\leq T$, $W^i_s=\varpi^i(s,X^{0,x}_s)$ and $Z^i(\omega):=(Z^i_t(\omega))_{t\leq T}$ is $dt$-square integrable ;

(b) For any $s\le T$,
\begin{equation}\label{main BSDE}
\left\{
\begin{aligned}
-dW_s^i&= H_i\left(s, X_s^{0,x}, Z_s^i, u^*(s, X_s^{0,x},Z_s^1, Z_s^2),
v^*(s, X_s^{0,x}, Z_s^1, Z_s^2)\right)ds- Z_s^i dB_s, \\
W_{T}^i&= g^i(X_T^{0,x}).
\end{aligned}
\right.
\end{equation}
Then the control $(u^*(s, X_s^{0,x}, Z_s^1, Z_s^2),v^*(s,
X_s^{0,x}, Z_s^1, Z_s^2))_{s\leq T}$ is admissible and a Nash equilibrium point for the NZSDG. 
\end{theorem}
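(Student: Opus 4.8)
This is a verification result. Given the data $(\varpi^i,W^i,Z^i)_{i=1,2}$ of hypothesis (ii), the plan is to build the equilibrium directly, using a Girsanov change of probability to pass from $B$ to the Brownian motion driving the controlled dynamics, and then the generalized Isaacs inequalities of (A3)-(i) to compare payoffs. First I would check that
\[
u^\star_s := u^*(s,X^{0,x}_s,Z^1_s,Z^2_s), \qquad v^\star_s := v^*(s,X^{0,x}_s,Z^1_s,Z^2_s), \qquad s\le T,
\]
is an admissible pair: since $u^*,v^*$ are Borelian, $X^{0,x}$ is $\cP$-measurable and $Z^1,Z^2$ are $\cP$-measurable by (ii)-(a), the processes $u^\star,v^\star$ are $\cP$-measurable with values in the compact sets $U_1,U_2$, hence $(u^\star,v^\star)\in\cM$. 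I will carry out the rest for the player $\pi_1$; the argument for $\pi_2$ is identical after exchanging the roles and using the second inequality of (A3)-(i), with $Z^2$ in the momentum slot.

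Next I would fix an arbitrary $u\in\m_1$ and set $(u,v^\star)\in\cM$. Under $\bp^{(u,v^\star)}$ the process $B^{(u,v^\star)}_\cdot=B_\cdot-\int_0^\cdot\sigma^{-1}(r,X^{0,x}_r)f(r,X^{0,x}_r,u_r,v^\star_r)\,dr$ is a Brownian motion, and substituting $dB_s=dB^{(u,v^\star)}_s+\sigma^{-1}(s,X^{0,x}_s)f(s,X^{0,x}_s,u_s,v^\star_s)\,ds$ into (\ref{main BSDE}) for $i=1$ and expanding $H_1$ turns it into a BSDE, under $\bp^{(u,v^\star)}$, whose generator equals
\[
Z^1_s\,\sigma^{-1}(s,X^{0,x}_s)\big\{f(s,X^{0,x}_s,u^\star_s,v^\star_s)-f(s,X^{0,x}_s,u_s,v^\star_s)\big\}+h_1(s,X^{0,x}_s,u^\star_s,v^\star_s),
\]
a drift dominated by $C(1+|X^{0,x}_s|)\,|Z^1_s|+C(1+|X^{0,x}_s|^\gamma)$ thanks to (A1) and (A2)-(i),(ii). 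Since $W^1_s=\varpi^1(s,X^{0,x}_s)$ with $\varpi^1$ of polynomial growth, estimate (\ref{estimate of X2}) shows $W^1$ has moments of all orders under $\bp^{(u,v^\star)}$; then the localization argument of the proof of Proposition~\ref{Lemma BSDE solution existence wrt uv} — It\^o applied to $(W^1_{t\wedge\tau_k})^2$, the Burkholder--Davis--Gundy inequality, Young's inequality, Fatou's lemma — upgrades the merely pathwise square-integrability of $Z^1$ to $\be^{(u,v^\star)}[(\int_0^T|Z^1_s|^2ds)^{q/2}]<\infty$ for all $q\ge1$; in particular $\int_0^\cdot Z^1_s\,dB^{(u,v^\star)}_s$ is a true $\bp^{(u,v^\star)}$-martingale.

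Now the Nash inequality. The first line of (A3)-(i), applied pointwise with $(t,x,p,q)=(s,X^{0,x}_s,Z^1_s,Z^2_s)$, gives $ds\,d\bp$-a.e. that $H_1(s,X^{0,x}_s,Z^1_s,u^\star_s,v^\star_s)\le H_1(s,X^{0,x}_s,Z^1_s,u_s,v^\star_s)$. Inserting this bound into the integrated form of (\ref{main BSDE}) and rewriting $\int_t^T Z^1_s\sigma^{-1}(s,X^{0,x}_s)f(s,X^{0,x}_s,u_s,v^\star_s)\,ds-\int_t^T Z^1_s\,dB_s=-\int_t^T Z^1_s\,dB^{(u,v^\star)}_s$, one is left, for every $t\le T$, with
\[
W^1_t\le g^1(X^{0,x}_T)+\int_t^T h_1(s,X^{0,x}_s,u_s,v^\star_s)\,ds-\int_t^T Z^1_s\,dB^{(u,v^\star)}_s.
\]
Taking $\be^{(u,v^\star)}[\,\cdot\mid\cF_t]$ — legitimate because the stochastic integral is a $\bp^{(u,v^\star)}$-martingale and the rest is $\bp^{(u,v^\star)}$-integrable by (A2)-(ii),(iii) and (\ref{estimate of X2}) — and then specializing to $t=0$ (where $\cF_0$ carries only null sets) gives $W^1_0\le J^1(u,v^\star)$. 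Choosing $u=u^\star$ the Isaacs step is an equality, the generator reduces to $h_1(\cdot,u^\star,v^\star)$, and the same computation gives $W^1_0=J^1(u^\star,v^\star)$ (alternatively, by uniqueness in Proposition~\ref{Lemma BSDE solution existence wrt uv}, $W^1=W^{1,(u^\star,v^\star)}$). Hence $J^1(u^\star,v^\star)\le J^1(u,v^\star)$ for every $u\in\m_1$; the symmetric statement $J^2(u^\star,v^\star)\le J^2(u^\star,v)$ for every $v\in\m_2$ follows the same way, so $(u^\star,v^\star)$ is a Nash equilibrium point.

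I expect the only delicate point to be the integrability step in the second paragraph: hypothesis (ii)-(a) provides $Z^1,Z^2$ that are only $dt$-square-integrable $\omega$ by $\omega$, whereas all the conditional-expectation manipulations require these processes to sit in $\mathcal{H}^2$ under the deviated probabilities $\bp^{(u,v^\star)}$ and $\bp^{(u^\star,v)}$, so that the relevant Brownian integrals are genuine martingales. Since the Girsanov density is only $L^{p_0}$-integrable for some $p_0\in(1,2)$ (Corollary~\ref{corol lp}), the estimate has to be arranged with some care for the Young/H\"older exponents, but it follows exactly the pattern already carried out in Proposition~\ref{Lemma BSDE solution existence wrt uv}. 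Everything else — admissibility, the algebra after the change of measure, and the use of the Isaacs inequalities — is routine.
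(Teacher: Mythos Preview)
Your proof is correct, and it reaches the Nash inequality by a route that differs from the paper's. The paper does not attempt to bound $W^1_t$ directly by the conditional payoff. Instead it invokes Proposition~\ref{Lemma BSDE solution existence wrt uv} to produce, for each deviation $u\in\m_1$, the auxiliary pair $(W^{1,(u,v^\star)},Z^{1,(u,v^\star)})$, sets $\triangle W:=W^1-W^{1,(u,v^\star)}$, and applies the It\^o--Meyer formula to $|(\triangle W)^+|^q$ between $t\wedge\tau_k$ and $\tau_k$. The Isaacs inequality kills the sign of the generator difference, the remaining $\triangle Z\cdot\sigma^{-1}f$ term is absorbed into a $dB^{(u,v^\star)}$-integral, and one is left with $\be^{(u,v^\star)}[|(\triangle W_{t\wedge\tau_k})^+|^q]\le \be^{(u,v^\star)}[|(\triangle W_{\tau_k})^+|^q]$; uniform integrability of $\sup_s(|W^1_s|+|W^{1,(u,v^\star)}_s|)$ under $\bp^{(u,v^\star)}$ (from the polynomial representation and (\ref{estimate of X2})) then lets $k\to\infty$.

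The trade-off is this: the paper never needs to know that $Z^1\in\mathcal{H}^2$ under the deviated measure $\bp^{(u,v^\star)}$ --- the localization keeps all stochastic integrals bounded, and the passage to the limit only requires moment control on the $W$-processes, which comes for free from the representation $W^1=\varpi^1(\cdot,X^{0,x}_\cdot)$ and from (\ref{estimate of w1uv and z1uv under puv}). Your approach is conceptually simpler (one BSDE, one change of measure, one conditional expectation) but you must first \emph{establish} the $\mathcal{H}^2(\bp^{(u,v^\star)})$-integrability of $Z^1$ so that $\int_0^\cdot Z^1_s\,dB^{(u,v^\star)}_s$ is a genuine martingale. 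You correctly flag this as the delicate point and the localization you sketch does work: with $\tau_k:=\inf\{s:|W^1_s|+\int_0^s|Z^1_r|^2dr\ge k\}\wedge T$, the cross term $|W^1_s|\,(1+|X^{0,x}_s|)\,|Z^1_s|$ is absorbed via Young's inequality and the polynomial growth of $\varpi^1$, and Fatou closes the estimate. So both arguments are sound; yours is shorter once the integrability lemma is in hand, while the paper's avoids that lemma altogether at the cost of the It\^o--Meyer machinery on $|(\triangle W)^+|^q$.
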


\begin{proof}
For $s\leq T$, let us set $u^{*}_s= u^{*}(s, X_s^{0,x},Z_s^1, Z_s^2)$
and $v^{*}_s= v^{*}(s, X_s^{0,x},Z_s^1, Z_s^2)$, then $(u^*,v^*)\in \cM$. On the other hand we obviously have, thanks to Proposition \ref{Lemma BSDE solution existence wrt uv}, $W^1_0=J^1(u^*,v^*)$.

Next let $u$ be an arbitrary element of $\m_1$ and let us show that $W^1\leq W^{1,(u, v^{*})}$, which
yields $W^1_0=J^1(u^{*}, v^{*})\leq W^{1,(u, v^{*})}_0=J^1(u, v^*)$.
\medskip

\noindent The control $(u, v^*)$ is admissible and thanks to
Proposition \ref{Lemma BSDE solution existence wrt uv}, there exists
a pair of $\mathcal{P}$-measurable processes $(W^{1,(u, v^*)}, Z^{1,(u, v^*)})$  such that for any $q>1$,
\begin{equation}\label{eqjeu}\left\{
\begin{aligned}
&\be^{(u,v^*)}\Big[\sup_{0\leq t\leq T}|W_t^{1,(u,v^*)}|^q+\big(\int_0^T\abs{Z_s^{1,(u,v^*)}}^2ds\big)^{\frac{q}{2}}\Big]<\infty\\&
W_t^{1,(u, v^*)}\!\!=\!g^1(X_T^{0,x})\!+\!\!\int_t^T\! H_1(s, X_s^{0,x},
Z_s^{1,(u, v^*)}, u_s, v^*_s)ds\!-\!\!\int_t^T\! Z_s^{1,(u, v^*)}dB_s, \,\forall t\leq T.
\end{aligned}
\right.
\end{equation}

\noindent Afterwards, we aim to compare $W^1$ and $W^{1,(u, v^*)}$. So let us denote by
\begin{equation*}
\triangle W= W^1- W^{1,(u, v^*)} \mbox{ and }\triangle Z= Z^1- Z^{1,(u,
v^*)}.
\end{equation*}
\noindent For $k\geq 0$, we define the stopping time $\tau_k$ as
follows:
\begin{equation*}
\tau_k:= \inf\{s\geq 0, \abs{\triangle
W_s}+\int_0^s\abs{\triangle Z_r}^2dr\geq k\}\wedge T.
\end{equation*}
The sequence of stopping times $(\tau_k)_{k\geq 0}$ is of stationary type and converges to $T$. Next applying It\^{o}-Meyer formula to $\abs{(\triangle W)^+}^q$ $(q>1)$ (see Theorem 71, P.Protter, \cite{protter}, pp.221), between $t\wedge \tau_k$ and $\tau_k$, we obtain: $\forall t\leq T$, 
\begin{equation*}
\begin{aligned}
|(\triangle &W_{t\wedge \tau_k})^+|^q+c(q)\int_{t\wedge \tau_k}^{\tau_k}\abs{(\triangle W_s)^+}^{q-2}1_{\triangle W_s>0}\abs{\triangle Z_s}^2ds\\
&= \abs{(\triangle W_{\tau_k})^+}^q+ q\int_{t\wedge
\tau_k}^{\tau_k}\abs{(\triangle W_{s})^+}^{q-1}
1_{\triangle W_s>0} \Big(H_1(s, X_s^{0,x}, Z_s^1, u_s^*,
v_s^*)-\\
&\qquad H_1(s, X_s^{0,x}, Z_s^{(u, v^*)}, u_s, v^*_s)\Big)ds -
q\int_{t\wedge \tau_k}^{\tau_k}\abs{(\triangle
W_{s})^+}^{q-1}1_{\triangle W_s>0}\triangle Z_sdB_s\\
&=\abs{(\triangle W_{\tau_k})^+}^q + q\int_{t\wedge
\tau_k}^{\tau_k}\abs{(\triangle W_{s})^+}^{q-1}
1_{\triangle W_s>0} \Big(H_1(s, X_s^{0,x}, Z_s^1, u_s^*,
v_s^*)-\\
&\quad H_1(\!s,\! X_s^{0,x},\! Z_s^1, \!u_s, \!v^*_s) \!+\!H_1(s, X_s^{0,x}, Z_s^1, u_s, v^*_s)\!-\!H_1(s, X_s^{0,x}, Z_s^{(u,v^*)}, u_s, v^*_s)\Big)ds\\
&\quad - q\int_{t\wedge \tau_k}^{\tau_k}\abs{(\triangle
W_{s})^+}^{q-1}1_{\triangle W_s>0}\triangle Z_sdB_s,
\end{aligned}
\end{equation*}
where $c(q)=\frac{q(q-1)}{2}$. Considering now the generalized
Isaacs' Assumption (A3), we have that,
\begin{equation*}
H_1(s,X_s^{0,x}, Z_s^1, u_s^*,v_s^*)-H_1(s, X_s^{0,x}, Z_s^1, u_s, v^*_s)\leq 0, \forall s\leq T.
\end{equation*}

\noindent Therefore,
\begin{equation}\label{ito meyer formula}
\begin{aligned}
\big|(&\triangle W_{t\wedge \tau_k})^+\big|^q+c(q)\int_{t\wedge \tau_k}^{\tau_k}\abs{(\triangle W_s)^+}^{q-2}1_{\triangle W_s>0}\abs{\triangle Z_s}^2ds\\
&\leq \abs{(\triangle W_{\tau_k})^+}^q\!+\! q\int_{t\wedge
\tau_k}^{\tau_k}\abs{(\triangle W_{s})^+}^{q-1}
1_{\triangle W_s>0} \triangle Z_s \sigma^{-1}(s, X_s^{0,x})f(s, X_s^{0,x}, u_s, v^*_s)ds\\
&\quad - q\int_{t\wedge \tau_k}^{\tau_k}\abs{(\triangle
W_{s})^+}^{q-1}1_{\triangle W_s>0}\triangle Z_sdB_s\nonumber\\
&=\abs{(\triangle W_{\tau_k})^+}^q-q\int_{t\wedge
\tau_k}^{\tau_k}\abs{(\triangle
W_{s})^+}^{q-1}1_{\triangle W_s>0}\triangle Z_sdB_s^{(u,
v^*)},
\end{aligned}
\end{equation}
where $B^{(u, v^*)}= (B_t- \int_0^{t} \sigma^{-1}(s,
X_s^{0,x})f(s, X_s^{0,x}, u_s, v^*_s)ds)_{t\leq T}$ is an
$(\mathcal{F}_t^0, \bp^{(u, v^*)})$-Brownian motion. Then for any $t\leq T$,
\begin{equation}\label{3.11 middle result}
\begin{aligned}
\abs{(\triangle W_{t\wedge \tau_k})^+}^q\leq \abs{(\triangle W_{\tau_k})^+}^q- q\int_{t\wedge
\tau_k}^{\tau_k}\abs{(\triangle
W_{s})^+}^{q-1}1_{\triangle W_s>0}\triangle Z_sdB_s^{(u,
v^*)}.
\end{aligned}
\end{equation}

\noindent By definition of the stopping time $\tau_k$, we have
$$
\be^{(u, v^*)}\Big[\int_{t\wedge \tau_k}^{\tau_k}\abs{(\triangle W_s)^+}^{q-1}1_{\triangle W_s>0}\triangle Z_sdB_s^{(u,v^*)}\Big]=0.
$$Then taking expectation on both sides of (\ref{3.11 middle result}) we obtain:
\begin{align}\label{estim}
\be^{(u, v^*)}\left[\abs{(\triangle W_{t\wedge \tau_k})^+}^q\right] &\le  \be^{(u, v^*)}\Big[\abs{(W^1_{\tau_k}-W_{\tau_k}^{1,(u,v^*)})^+}^q\Big].
\end{align}
Next taking into account (\ref{eqjeu}) and the fact that $W^1$ has a representation through $\varpi^1$ which is deterministic and of polynomial growth
 and finally (\ref{estimate of X2}), we deduce that
\begin{equation}\label{unint}
\be^{(u, v^*)}\Big[\sup_{s\leq T}(|W^{1,(u,v^*)}_s|+|W^{1}_s|)^q\Big]<\infty.\end{equation}
As the sequence $((W^1_{\tau_k}-W_{\tau_k}^{1,(u,v^*)})^+)_k$
converges to $0$ as $k\rightarrow \infty$,  $\bp^{(u,v^*)}$-a.s., then it converges also to $0$ in $L^1(d\bp^{(u,v^*)})$ since it is uniformly integrable thanks to (\ref{unint}).
Taking now the limit w.r.t. $k$ on both sides of (\ref{estim}) and finally by Fatou's Lemma we deduce that:
 $$
 \be^{(u, v^*)}\left[\triangle W_{t}^+\right]=0,\,\,\forall t\leq T, 
 $$
which implies that $W^1\leq W^{1,(u, v^{*})}$, $\bp$-a.s., since the probabilities $\bp^{(u,v^*)}$ and $\bp$ are equivalent. Thus 
 $W^1_0=J^1(u^{*}, v^{*})\leq W^{1,(u, v^{*})}_0=J^1(u, v^*)$. 
 
In the same way one can show that if $v$ is an arbitrary element of $\cM_2$ then $W^2_0=J^2(u^{*}, v^{*})\leq W^{2,(u^*, v)}_0=J^2(u^*,v)$.
 Henceforth $(u^*,v^*)$ is a Nash equilibrium point for the NZSDG. 
\end{proof}

Now, the main emphasis is placed on the existence of a solution for
the BSDE (\ref{main BSDE}) with its properties.

\section{Existence of solutions for markovian  BSDEs related to differential games}
\subsection{Deterministic representation }
Let $\ell$ be an integer and let us consider $\bar f$ (resp. $\bar g$) a Borel measurable function from $[0,T]\times \R^{m+\ell+\ell\times m}$ (resp.
$\R^{m}$) into $\R^\ell$ (resp. $\R^\ell$) such that:

(a) For any fixed $(t,x)\in [0,T]\times \R^{m}$, the mapping $(y,z)\in \R^{\ell+\ell\times m}\longmapsto \bar f(t,x,y,z)$ is uniformly Lipschitz ;

(b) There exist real constants $C$ and $p>0$ such that
$$
 \abs{\bar {f}(t,x,y,z)}+\abs{\bar g(x)}\leq C(1+ \abs{x}^p), \ \forall (t,x,y,z)\in
[0, T]\times \R^{m+\ell+\ell\times m}.
$$
Then we have the following result by
El Karoui et al. \cite{el1997backward a} related to representation of solutions of BSDEs through deterministic functions in the Markovian framework of randomness.

\begin{proposition}\label{relationship of x and yz} Assume that (A1), (i)-(ii) and (A4) are fulfilled. Let $(t,x)\in \esp$ be fixed and $(\theta_s^{t,x})_{t\leq s\leq T}$ be the solution of SDE (\ref{forward equation of x}). Let $(y^{t,x}_s,z^{t,x}_s)_{t\leq s\leq T}$ be the solution of the following BSDE:
\begin{equation*}
\left\{
\begin{aligned}
&y^{t,x}\in \mathcal{S}_{t,T}^2(\R^\ell), z^{t,x}\in \mathcal{H}_{t,T}^2(\R^{\ell\times m});\\
&-dy_s^{t,x}= \bar{f}(s, \theta_s^{t,x}, y_s^{t,x}, z_s^{t,x})ds-
z_s^{t,x}dB_s,\,\, s\in[t,T];\\ &y_T= \bar g(\theta_T^{t,x}).
\end{aligned}
\right.
\end{equation*}
Then there exists a pair of measurable and deterministic
applications $\varpi$: $[0,T]\times \R^m \rightarrow \R^\ell$ and $\upsilon$:
$[0,T]\times \R^m \rightarrow \R^{\ell\times d}$ such that,
\begin{equation*}\bp- a.s., \forall t\leq s\leq T, \quad y_s^{t,x}= \varpi(s, \theta_s^{t,x})\ and \  z_s^{t,x}=
\upsilon(s, \theta_s^{t,x}). 
\end{equation*}
Moreover,
\\
  (i) $\forall (t, x)\in [0, T]\times \R^m$, $\varpi(t,x)= \be[\int_t^T{\bar {f}(r, \theta_r^{t,x}, y_r^{t,x}, z_r^{t,x})}dr+
  \bar g(\theta_T^{t,x})]$;\\
  (ii) For any other $(t_1, x_1)\in [0,T]\times \R^m$, the
  process $(\varpi(s, \theta_s^{t_1, x_1}),\  \upsilon(s, \theta_s^{t_1, x_1}))_{t_1\leq s\leq T}$ is the
  unique solution in $\mathcal{S}_{t_1,T}^2(\R^\ell)\times \mathcal{H}_{t_1,T}^2(\R^{\ell\times m})$ of the BSDE associated with the coefficients $(\bar {f}(s, \theta_s^{t_1,x_1}, y, z),\bar
  g(\theta_T^{t_1,x_1}))$ in the time interval $[t_1,T]$.$\Box$
\end{proposition}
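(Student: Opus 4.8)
The plan is to invoke the existence-and-uniqueness theory for Markovian BSDEs with Lipschitz generator together with the Markov/flow property of the forward diffusion $(\theta^{t,x}_s)$, following El Karoui, Peng and Quenez. First I would note that since $(y,z)\mapsto\bar f(t,x,y,z)$ is uniformly Lipschitz and $\bar f(\cdot,\theta^{t,x}_\cdot,0,0)$ and $\bar g(\theta^{t,x}_T)$ have the required integrability — this follows from the polynomial growth bound (b) combined with the moment estimate \eqref{estimate of X} applied to $(\theta^{t,x}_s)$, which gives $\bar g(\theta^{t,x}_T)\in L^2_T(\R^\ell)$ and $\bar f(s,\theta^{t,x}_s,0,0)\in\mathcal{H}^2_{t,T}(\R^\ell)$ — the BSDE has a unique solution $(y^{t,x},z^{t,x})$ in $\mathcal{S}^2_{t,T}(\R^\ell)\times\mathcal{H}^2_{t,T}(\R^{\ell\times m})$ by the classical fixed-point argument.

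Next I would construct the deterministic functions. The natural definition is $\varpi(t,x):=y^{t,x}_t$; since $y^{t,x}_t$ is $\mathcal{F}_t$-measurable and $\mathcal{F}_0$-measurable in the shifted problem (or, more simply, since at time $t$ the solution of the BSDE on $[t,T]$ driven by the deterministic starting point $x$ is a constant), $y^{t,x}_t$ is in fact deterministic, so $\varpi$ is a genuine function of $(t,x)$; its measurability in $(t,x)$ follows from continuity/stability estimates of BSDEs with respect to the terminal data and the generator, which are standard. Formula (i), $\varpi(t,x)=\be[\int_t^T\bar f(r,\theta^{t,x}_r,y^{t,x}_r,z^{t,x}_r)dr+\bar g(\theta^{t,x}_T)]$, is then just the BSDE evaluated at $s=t$ after taking expectations (the stochastic integral has zero expectation because $z^{t,x}\in\mathcal{H}^2_{t,T}$). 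The identification $\upsilon$ is obtained by a standard argument: for fixed $(t,x)$ the process $(y^{t,x}_s)_{s\in[t,T]}$ coincides, by the flow property $\theta^{t,x}_s=\theta^{s,\theta^{t,x}_s}_s$ and uniqueness of BSDE solutions on $[s,T]$, with $\varpi(s,\theta^{t,x}_s)$, and then $\upsilon(s,x)$ is read off from the martingale representation / the fact that two continuous semimartingales with the same finite-variation part have the same martingale part, giving $z^{t,x}_s=\upsilon(s,\theta^{t,x}_s)$ $ds\otimes d\bP$-a.e.; one uses a measurable-selection / monotone-class argument to get a single function $\upsilon$ valid simultaneously for all $(t,x)$.

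Claim (ii) follows once the representation is established: for another $(t_1,x_1)$, the pair $(\varpi(s,\theta^{t_1,x_1}_s),\upsilon(s,\theta^{t_1,x_1}_s))_{t_1\le s\le T}$ satisfies, by the just-proved identity applied along the trajectory $\theta^{t_1,x_1}$, the BSDE on $[t_1,T]$ with generator $\bar f(s,\theta^{t_1,x_1}_s,y,z)$ and terminal value $\bar g(\theta^{t_1,x_1}_T)$; since that terminal value is in $L^2$ and the generator is Lipschitz with the right integrability, uniqueness in $\mathcal{S}^2_{t_1,T}(\R^\ell)\times\mathcal{H}^2_{t_1,T}(\R^{\ell\times m})$ forces this pair to be the unique solution.

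The main obstacle is the rigorous proof that $\varpi$ and $\upsilon$ are jointly measurable in $(t,x)$ and that $\upsilon$ can be chosen once and for all independently of the initial condition $(t,x)$ — this requires the continuity-in-data estimates for BSDEs (Lipschitz dependence of the solution on terminal value and generator in $\mathcal{S}^2\times\mathcal{H}^2$ norm) together with a monotone-class / approximation argument on simple initial data, and handling the $ds\otimes d\bP$-null sets uniformly. Since everything here is exactly the content of the cited result of El Karoui et al., I would simply refer to \cite{el1997backward a} for the detailed verification, checking only that our standing assumptions (A1)(i)-(ii), (A4) on $\sigma,b$ and the hypotheses (a)-(b) on $\bar f,\bar g$ put us in their framework, which they do via the moment bound \eqref{estimate of X}.
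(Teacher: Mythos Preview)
Your proposal is correct and aligns with the paper: this proposition is simply \emph{quoted} from El~Karoui, Peng and Quenez \cite{el1997backward a} without proof (the statement ends with a $\Box$), so a sketch of the Markovian-representation argument together with a reference to \cite{el1997backward a}, as you give, is exactly what is appropriate here. Your checks that (A1)(i)--(ii), (A4) and hypotheses (a)--(b) place us in their framework via the moment estimate \eqref{estimate of X} are precisely what is needed.
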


We next recall the notion of domination which is important in order to show that equation (\ref{main BSDE}) has a solution.
\begin{definition}\label{cdtdom}\textbf{: $L^{q}$-Domination condition}
\\
Let $q\in ]1,\infty[$ be fixed. For a given $t_1\in [0,T]$, a family of probability measures
$\{\nu_1(s,dx), s\in [t_1, T]\}$ defined on $\R^m$ is said to be $L^{q}$-
dominated by another family of probability measures
$\{\nu_0(s,dx), s\in [t_1, T]\}$, if for any $\delta \in(0, T-t_1]$,
there exists an application $\phi^\d_{t_1}: [t_1+\d, T]\times \R^m \rightarrow
\R^+$ such that:
\\

(i) $\nu_1$(s, dx)ds= $\phi^\d_{t_1}$(s, x)$\nu_0$(s, dx)ds on $[t_1+\delta, T]\times$
  $\R^m$.
  
(ii) $\forall k\geq 1$, $\phi^\d_{t_1}(s,x) \in L^q([t_1+\delta, T]\times [-k, k]^m$; $\nu_0(s,
  dx)ds)$.$\Box$
\end{definition}\label{definition of lp dominated}
\medskip

We then have:

\begin{lemma}\label{aronson's estimate}
\no Assume (A1) and (A4) fulfilled and the drift term $b(t,x)$ of SDE 
(\ref{forward equation of x}) is bounded. Let $q\in ]1,\infty[$ be fixed, $(t_0,x_0)\in \esp$ and let $(\theta_s^{t_0,x_0})_{t_0\leq s\leq T}$
be the solution of SDE (\ref{forward equation of x}). Then for any $s\in (t_0,T]$, the law $\bar \mu(t_0, x_0; s, dx)$ of
$\theta_s^{t_0,x_0}$ has a density function $\rho_{t_0,x_0}(s,x)$, w.r.t. Lebesgue measure $dx$, which satisfies the following estimate: $\forall (s,x)\in (t_0,T]\times \R^m$, 
\begin{equation}\label{aron-est}
\varrho_1(s-t_0)^{-\frac{m}{2}}exp\left[-\frac{\Lambda\abs{x-x_0}^2}{s-t_0}\right]\leq
\rho_{t_0,x_0}(s,x)\leq
\varrho_2(s-t_0)^{-\frac{m}{2}}exp\left[-\frac{\lambda\abs{x-x_0}^2}{s-t_0}\right]
\end{equation}
\no where $\varrho_1$, $\varrho_2$, $\Lambda$, $\lambda$ are real constants such that $0\leq \varrho_1 \leq \varrho_2$ and $0\leq \lambda \leq \Lambda$. Additionally for any $(t_1,x_1)\in [t_0,T]\times \R^m$, the family of laws $\{\bar\mu(t_1,x_1;s,dx), s\in [t_1,T]\}$ is
$L^q$-dominated by $\bar\mu(t_0,x_0;s,dx)$.
\end{lemma}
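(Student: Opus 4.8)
The plan is to split the statement into two parts: first the two-sided Gaussian bound on the transition density, then the domination property, which will follow from the bound essentially by inspection. For the first part, I would invoke the classical theory of parabolic PDEs with bounded measurable coefficients. Under Assumptions (A1) and (A4), together with the extra hypothesis that $b$ is bounded, the generator of the diffusion $(\theta^{t_0,x_0}_s)$ is the uniformly elliptic operator $\mathcal{L}_s = \frac12\sum_{i,j}(\sigma\sigma^\top)_{ij}(s,x)\partial^2_{x_ix_j} + \sum_i b_i(s,x)\partial_{x_i}$, whose principal part is uniformly elliptic and bounded thanks to H\"ormander's condition (\ref{horm}) and the boundedness of $\sigma$, and whose first-order coefficients are bounded. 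The existence of a fundamental solution $\rho_{t_0,x_0}(s,x)$ for $\partial_s - \mathcal{L}_s$ satisfying the two-sided Gaussian estimate (\ref{aron-est}) is precisely Aronson's theorem; I would cite the standard reference (Aronson, or Friedman's book on parabolic PDE, or Stroock's notes) and identify $\rho_{t_0,x_0}(s,\cdot)$ with the density of the law $\bar\mu(t_0,x_0;s,dx)$ of $\theta^{t_0,x_0}_s$ via the Feynman--Kac / martingale-problem correspondence, which is legitimate since weak uniqueness holds for (\ref{forward equation of x}) under (A1) and (A4). The constants $\varrho_1,\varrho_2,\lambda,\Lambda$ depend only on $T$, the dimension $m$, the ellipticity constant $\Upsilon$ and the bounds on $\sigma$ and $b$.

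For the domination part, fix $(t_1,x_1)\in[t_0,T]\times\R^m$ and $\delta\in(0,T-t_1]$. On $[t_1+\delta,T]\times\R^m$ both $\bar\mu(t_1,x_1;s,dx)$ and $\bar\mu(t_0,x_0;s,dx)$ are absolutely continuous with respect to Lebesgue measure, so I can define
\[
\phi^\delta_{t_1}(s,x) := \frac{\rho_{t_1,x_1}(s,x)}{\rho_{t_0,x_0}(s,x)},\qquad (s,x)\in[t_1+\delta,T]\times\R^m,
\]
which immediately gives property (i) of Definition \ref{cdtdom}. For property (ii), I would use the upper bound on the numerator and the lower bound on the denominator from (\ref{aron-est}):
\[
\phi^\delta_{t_1}(s,x) \le \frac{\varrho_2}{\varrho_1}\Big(\frac{s-t_0}{s-t_1}\Big)^{m/2}\exp\!\Big[\frac{\Lambda|x-x_0|^2}{s-t_0} - \frac{\lambda|x-x_1|^2}{s-t_1}\Big].
\]
On the time interval $[t_1+\delta,T]$ the ratio $(s-t_0)/(s-t_1)$ is bounded (by $(T-t_0)/\delta$), and the only delicate point is the sign competition in the exponent. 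Restricting to a fixed compact $[-k,k]^m$ in the $x$-variable, the positive term $\Lambda|x-x_0|^2/(s-t_0)$ is uniformly bounded, so $\phi^\delta_{t_1}$ is in fact bounded on $[t_1+\delta,T]\times[-k,k]^m$, hence a fortiori in $L^q$ of that set against $\nu_0(s,dx)ds = \rho_{t_0,x_0}(s,x)\,dx\,ds$ (which is a finite measure there). This establishes (ii) for every $k\ge 1$ and every $q\in(1,\infty)$.

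The main obstacle is not really in the domination bookkeeping, which is elementary once the Gaussian estimates are in hand, but in making the invocation of Aronson's theorem clean: one must check that the hypothesis set (A1)+(A4)+"$b$ bounded" exactly matches the standing assumptions under which the two-sided heat-kernel bound is available — uniform ellipticity and boundedness of the diffusion matrix (from (A1)) and boundedness of the drift (the added hypothesis), with only measurability required beyond that — and that the probabilistic object (the law of the weak solution) coincides with the analytic fundamental solution. I would devote the bulk of the writeup to pinning down this identification and the dependence of the constants, and keep the domination argument short. One should also note, for use in the sequel, that the same reasoning shows $\phi^\delta_{t_1}$ is locally bounded, which is even stronger than the stated $L^q$ membership and is what makes the later weak-to-strong convergence argument work.
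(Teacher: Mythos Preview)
Your proposal is correct and follows essentially the same approach as the paper: cite Aronson's theorem for the two-sided Gaussian bound (using uniform ellipticity of $\sigma\sigma^\top$ from (A1) and boundedness of $b$), then form the ratio $\phi_{t_1,x_1}(s,x)=\rho_{t_1,x_1}(s,x)/\rho_{t_0,x_0}(s,x)$, bound it above via the upper Aronson estimate on the numerator and the lower one on the denominator, and observe that on $[t_1+\delta,T]\times[-k,k]^m$ this upper bound is a bounded function, whence the $L^q$ membership against $\bar\mu(t_0,x_0;s,dx)ds$. The paper is terser---it simply invokes Aronson's result without discussing the probabilistic/analytic identification of the density or the dependence of the constants---but the substance of the argument is identical, including your observation that $\phi^\delta_{t_1}$ is in fact locally bounded (the paper notes this as well, bounding it by a constant $\kappa$ depending on $t_0,t_1,\delta,\Lambda,\lambda,k,x_0$).
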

\no $Proof$: Since $\sigma$ satisfies (\ref{horm}) and $b$ is bounded, then by Aronson's result (see \cite{aronson1967bounds}),
the law $\bar\mu(t_0, x_0; s, dx)$ of
$\theta_s^{t_0,x_0}$, $s\in ]t_0,T]$, has a density function $\rho_{t_0,x_0}(s,x)$ which satisfies estimate (\ref{aron-est}). 
\ms

Let us focus on the second claim of the lemma. Let $(t_1,x_1)\in [t_0,T]\times \R^m$ and $s\in (t_1,T]$. Then, \begin{equation*}
\rho_{t_1,x_1}(s,x)=[\rho_{t_1,x_1}(s,x)\rho^{-1}_{t_0,x_0}(s,x)]\rho_{t_0,x_0}(s,x)=\phi_{t_1,x_1}(s,x)\rho_{t_0,x_0}(s,x)
\end{equation*}
\no with
$$\phi_{t_1,x_1}(s,x)=\left[\rho_{t_1,x_1}(s,x)\rho^{-1}_{t_0,x_0}(s,x)\right],
(s,x)\in (t_1,T]\times \R^m.$$ 
For any $\d\in (0,T-t_1]$, $\phi_{t_1,x_1}$ is defined on $[t_1+\d,T]$. Moreover for any $(s,x)\in [t_1+\d,T]\times \R^m$ it holds,
$$\begin{array}{ll}
\bar\mu(t_1,x_1;s,dx)ds&=\rho_{t_1,x_1}(s,x)dxds\\{}&=\phi_{t_1,x_1}(s,x)
\rho_{t_0,x_0}(s,x)dxds\\{}&=\phi_{t_1,x_1}(s,x)\bar\mu(t_0,x_0;s,dx)ds.
\end{array}$$
Next by (\ref{aron-est}), for any $(s,x)\in [t_1+\d,T]\times \R^m$,
\begin{equation*} 
0\le \phi_{t_1,x_1}(s,x)\leq
\frac{\varrho_2(s-t_1)^{-\frac{m}{2}}}{\varrho_1(s-t_0)^{-\frac{m}{2}}}exp\left[\frac{\Lambda\abs{x-x_0}^2}{s-t_0}-\frac{\lambda\abs{x-x_1}^2}{s-t_1}\right]:=\Phi_{t_1,x_1}(s,x).
\end{equation*}
It follows that for any $k\geq 0$, the function $\Phi_{t_1,x_1}(s,x)$ is bounded on $[t_1+\d,T]\times [-k,k]^m$ by a constant $\kappa$ which depends on $t_0$, $t_1$, $\d$, $\Lambda$, $\lambda$, $k$ and $x_0$. Next let $q\in (1,\infty)$, then,
$$\begin{array}{ll}
\int_{t_1+\d}^T\int_{[-k,k]^m}\Phi_{t_1,x_1}(s,x)^q\bar\mu(t_0,x_0;s,dx)ds&\leq \kappa^q \int_{t_1+\d}^T\int_{[-k,k]^m}\bar\mu(t_0,x_0;s,dx)ds\\{}&= \kappa^q\int_{t_1+\d}^Tds\be[1_{[-k,k]^m}(\theta_s^{t_0,x_0})]\leq \kappa^q T.
\end{array}
$$
Thus $\Phi$ and then $\phi$ belong to $L^q([t_1+\delta, T]\times [-k, k]^m$; $\nu_0(s,
  dx)ds)$. It follows that the family of measures 
$\{\bar\mu(t_1,x_1;s,dx), s\in [t_1,T]\}$ is
$L^q$-dominated by $\bar\mu(t_0,x_0;s,dx)$.  $\Box$
\ms 
  
As a by-product we have:
\begin{corollary}\label{cor dom}
Let $x_0\in \R^m$, $\tx$, $s\in (t,T]$ and 
$\mu(t,x;s,dy)$ the law of $X^{t,x}_s$, \textit{i.e.}, 
$$\forall A \in \mathcal{B}(\R^m),\,\,
\mu(t,x;s,A)= \bp(X_s^{t,x}\in A).$$ Under (A1) on $\sigma$, for any $q\in (1,\infty)$, the family 
of laws $\{\mu(t,x;s,dy), s\in [t,T]\}$ is $L^q$-dominated by 
$\{\mu(0,x_0;s,dy), s\in [t,T]\}$.$\Box$
\end{corollary}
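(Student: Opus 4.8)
The plan is to deduce the statement directly from Lemma~\ref{aronson's estimate} by specializing the drift to $b\equiv 0$. Indeed, the process $X^{t,x}$ was introduced as the solution of $X^{t,x}_s = x + \int_t^s \sigma(r,X^{t,x}_r)\,dB_r$ for $s\in[t,T]$ (and $X^{t,x}_s=x$ for $s\le t$), which is exactly the forward SDE \eqref{forward equation of x} with $b\equiv 0$. The null drift is trivially uniformly Lipschitz, of linear growth and bounded, so Assumption (A4) and the extra boundedness requirement of Lemma~\ref{aronson's estimate} are met; moreover $\sigma$ satisfies (A1), hence H\"ormander's condition \eqref{horm} holds by the Remark above. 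Consequently all the hypotheses of Lemma~\ref{aronson's estimate} are fulfilled by the family of laws of $X$.

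Next I would simply match the notation: with $b\equiv 0$, the law $\bar\mu(s_0,y_0;s,\cdot)$ of $\theta^{s_0,y_0}_s$ appearing in Lemma~\ref{aronson's estimate} is precisely the law $\mu(s_0,y_0;s,\cdot)$ of $X^{s_0,y_0}_s$. Applying the second assertion of Lemma~\ref{aronson's estimate} with $(t_0,x_0)$ there replaced by $(0,x_0)$ and $(t_1,x_1)$ there replaced by $(t,x)$ --- which is legitimate since $0\le t$ --- yields that $\{\mu(t,x;s,dy),\ s\in[t,T]\}$ is $L^q$-dominated by $\{\mu(0,x_0;s,dy),\ s\in[t,T]\}$ for every $q\in(1,\infty)$. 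This is the desired conclusion.

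There is essentially no obstacle here; the only points worth a line of verification are (a) that the auxiliary process of Lemma~\ref{aronson's estimate} with zero drift genuinely coincides with $X^{t,x}$, and (b) that the two-sided Gaussian bound \eqref{aron-est}, needed for the density ratio $\phi_{t,x}(s,\cdot)=\rho_{t,x}(s,\cdot)\rho_{0,x_0}^{-1}(s,\cdot)$ to be locally bounded, is available for the relevant times, which it is because the domination condition only requires $\delta>0$ and one works on $[t+\delta,T]$, away from the singularity of the heat kernel at $s=t$. Alternatively one could bypass Lemma~\ref{aronson's estimate} and argue from scratch: invoke Aronson's estimate \cite{aronson1967bounds} directly for the density $\rho_{t,x}(s,y)$ of $X^{t,x}_s$ under \eqref{horm}, and then reproduce verbatim the computation in the proof of Lemma~\ref{aronson's estimate} bounding $\int_{t+\delta}^{T}\int_{[-k,k]^m}\phi_{t,x}(s,y)^{q}\,\mu(0,x_0;s,dy)\,ds$ by a finite constant.
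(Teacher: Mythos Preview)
Your proposal is correct and matches the paper's treatment: the corollary is stated without proof (the $\Box$ immediately follows the statement), precisely because it is the specialization of Lemma~\ref{aronson's estimate} to the case $b\equiv 0$, where $\theta^{t,x}$ coincides with $X^{t,x}$. Your verification points (a) and (b) are the right ones, and the alternative direct argument you sketch is exactly the content of the lemma's proof.
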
 
\subsection{The main result}
We are now ready to provide a solution for BSDE (\ref{main BSDE}) which satisfies the representation property via deterministic functions with polynomial growth.

\begin{theorem}\label{th main result section 4}
Let $x_0\in \R^m$ be fixed. Then under the Assumptions (A1), (A2) and (A3), there exist:

(i) Two pairs of $\mathcal{P}$-measurable processes $(W^i_s,Z^i_s)_{s\leq T}$, $i=1,2$, such that: $\forall i\in \{1,2\}$,
\begin{equation}\label{main BSDE starting from 0 a}
\left\{
\begin{aligned}
&\bp-a.s., \,\,Z^i(\omega)=(Z^i_s(\omega))_{s\leq T} \mbox{ is }dt-\mbox{square integrable };\\
&\!-\!dW_s^i\!=\! H_i\left(s, X_s^{0,x_0}, Z_s^i, u_1^*(s, X_s^{0,x_0},
Z_s^1, Z_s^2),
u_2^*(s, X_s^{0,x_0}, Z_s^1, Z_s^2)\right)ds- Z_s^i dB_s, \\
&\qquad \qquad \qquad \qquad \qquad \qquad \qquad \qquad \qquad\qquad \qquad \qquad \qquad \qquad \qquad  \forall \,s\leq T\,\,;\\
&W_{T}^i= g^i(X_T^{0,x_0}).
\end{aligned}
\right.
\end{equation}

(ii) Two measurable deterministic functions
$\varpi^i$, $i=1,2$ with polynomial growth defined from $\esp$ into $\R$ such that:
$$\forall i=1,2,\,\,
W^i_s=\varpi^i(s,X^{0,x_0}_s), \,\,\forall s\in [0,T].$$
\end{theorem}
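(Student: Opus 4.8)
The plan is to construct the solution of the multidimensional BSDE (\ref{main BSDE starting from 0 a}) as the limit of an approximating sequence of BSDEs with Lipschitz coefficients, following the scheme outlined in the introduction. First I would regularize: since the generalized Isaacs functions $H_i^*(t,x,p,q)$ are only continuous in $(p,q)$ and of stochastic linear growth $\omega$ by $\omega$, I would introduce, for each $n\geq 1$, globally Lipschitz (e.g.\ sup-convolution / truncation) approximations $H_i^{*,n}$ that converge locally uniformly to $H_i^*$ and keep the same linear growth bound $|H_i^{*,n}(t,x,p,q)|\leq C(1+|x|)(1+|p|+|q|)$ uniformly in $n$. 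For the forward process one takes $\theta=X^{0,x_0}$ (i.e.\ $b\equiv 0$), so that (A4) holds trivially and estimate (\ref{estimate of X1}) is available. For each $n$ the BSDE with driver $H_i^{*,n}$ and terminal value $g^i(X_T^{0,x_0})$ has a unique solution $(Y^n,Z^n):=(Y^{1,n},Y^{2,n},Z^{1,n},Z^{2,n})$, and by Proposition~\ref{relationship of x and yz} (applied to the coupled $\ell=2$ system viewed with state $x$) there exist deterministic functions $(\varpi^n,\upsilon^n)$ of polynomial growth with $Y^n_s=\varpi^n(s,X^{0,x_0}_s)$, $Z^n_s=\upsilon^n(s,X^{0,x_0}_s)$.

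The second step is to derive $n$-uniform estimates. Using It\^o on $|Y^{i,n}|^2$ (or $|Y^{i,n}_t|^q$), the linear growth of the driver, the Girsanov change of measure that removes the $Z$-linear part (the density having $L^{p_0}$-moments by Lemma~\ref{density function lp bounded}/Corollary~\ref{corol lp}), the polynomial growth of $g^i$, and estimate (\ref{estimate of X1}), one obtains, for every $q\geq 1$,
\begin{equation*}
\be\Big[\sup_{s\leq T}|Y^{i,n}_s|^q+\big(\textstyle\int_0^T|Z^{i,n}_s|^2ds\big)^{q/2}\Big]\leq C,
\end{equation*}
with $C$ independent of $n$, and correspondingly $|\varpi^{i,n}(t,x)|\leq C(1+|x|^\gamma)$ uniformly in $n$ (a sharp estimate exploiting that the bad term $Z$ only enters linearly and gets killed by the Girsanov drift). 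These bounds, together with the Aronson-type density bounds of Lemma~\ref{aronson's estimate} and the $L^q$-domination of Corollary~\ref{cor dom}, let one view $(\varpi^n,\upsilon^n)$ as a bounded sequence in a weighted $L^2$-type space on $[0,T]\times\R^m$ (with reference measure $\mu(0,x_0;s,dy)ds$) and extract a weakly convergent subsequence $\varpi^{n_k}\rightharpoonup \varpi$. The key analytic device — and the main obstacle — is to upgrade this weak convergence to strong convergence of $\varpi^{n_k}$ (hence of $Y^{n_k}$ and then $Z^{n_k}$): this is where the $L^q$-domination condition is essential, allowing a diagonal/compactness argument (of the Peng–Kobylanski "monotone stability" flavour, combined with the deterministic representation) to produce a strongly convergent subsequence in $\mathcal{H}^2_T$ and $\mathcal{S}^2_T$ on every compact time sub-interval $[\delta,T]$, and then handle $[0,\delta]$ by the uniform estimates as $\delta\to 0$.

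The third step is to pass to the limit in the BSDE. Along the strongly convergent subsequence, $Z^{i,n_k}_s=\upsilon^{i,n_k}(s,X^{0,x_0}_s)\to Z^i_s$ in $\mathcal{H}^2_T$ and $ds\otimes d\bP$-a.e.\ (after a further subsequence), so by local uniform convergence $H_i^{*,n_k}(s,X^{0,x_0}_s,Z^{1,n_k}_s,Z^{2,n_k}_s)\to H_i^*(s,X^{0,x_0}_s,Z^1_s,Z^2_s)$, and the $L^1$-uniform integrability coming from the $n$-uniform $L^q$-estimates plus the Girsanov density lets one pass to the limit in the stochastic and Lebesgue integrals. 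This yields a pair $(W^i,Z^i):=(Y^i,Z^i)$ solving (\ref{main BSDE starting from 0 a}) with driver $H_i^*$; since by the Isaacs Assumption (A3) $H_i^*(s,X^{0,x_0}_s,Z^1_s,Z^2_s)=H_i(s,X^{0,x_0}_s,Z^i_s,u_1^*(\cdot),u_2^*(\cdot))$ with $u_j^*$ evaluated at $(s,X^{0,x_0}_s,Z^1_s,Z^2_s)$, this is exactly the required equation. Finally, setting $\varpi^i:=\lim \varpi^{i,n_k}$ and $\upsilon^i:=\lim\upsilon^{i,n_k}$ (the latter in the weighted space), the deterministic-representation part (ii) passes to the limit, giving $W^i_s=\varpi^i(s,X^{0,x_0}_s)$ with $\varpi^i$ inheriting the uniform polynomial growth bound; the $dt$-square integrability of $Z^i(\omega)$ is immediate from $Z^i\in\mathcal{H}^2_T$. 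I expect the strong-convergence step to be the genuinely hard one, as it is there that the multidimensional (non-comparison) nature of the system forces reliance on the $L^q$-domination rather than on a priori $Z$-bounds.
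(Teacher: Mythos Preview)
Your overall architecture --- Lipschitz regularization, uniform estimates via Girsanov and Lemma~\ref{density function lp bounded}, compactness, passage to the limit --- matches the paper's. The uniform polynomial growth of $\varpi^{i,n}$ via comparison with a one-dimensional BSDE whose $Z$-term is removed by Girsanov is exactly Step~2 of the paper, and your Step~3 (identifying the limit) is also essentially the paper's Step~5.

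The gap is in your ``upgrade weak to strong'' step, where the mechanism you sketch is not the one that works. You propose to extract a weakly convergent subsequence of $\varpi^{n}$ (and $\upsilon^{n}$) in a weighted $L^2$ space and then invoke a ``Peng--Kobylanski monotone stability'' type argument. But monotone stability needs a comparison principle, which is exactly what fails for this coupled two-dimensional system; and weak convergence of $\varpi^{n}$ by itself gives you nothing to feed into the BSDE. The paper instead extracts weak convergence of the \emph{driver values}
\[
F^{in}(s,y):=H_i^n\bigl(s,y,\upsilon^{1n}(s,y),\upsilon^{2n}(s,y)\bigr)
\]
in $L^q\bigl([0,T]\times\R^m;\mu(0,x_0;s,dy)ds\bigr)$ for some $q\in(1,2)$ (this boundedness follows from the $\mathcal{H}^2$-bound on $Z^{in}$ and H\"older's inequality). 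The crucial point is then the explicit representation
\[
\varpi^{in}(t,x)=\be\Bigl[g^i(X_T^{t,x})+\int_t^T F^{in}(s,X_s^{t,x})\,ds\Bigr],
\]
together with the $L^{q/(q-1)}$-domination of $\mu(t,x;s,\cdot)$ by $\mu(0,x_0;s,\cdot)$: the domination density $\phi_{t,x}$ lies in $L^{q/(q-1)}$ on $[t+\delta,T]\times[-k,k]^m$, so it is a legitimate test function against the weak $L^q$-convergence of $F^{in}$. Splitting the time integral into $[t,t+\delta]$ (small by the $L^q$ bound), $[t+\delta,T]\cap\{|X^{t,x}|\leq k\}$ (Cauchy by weak convergence), and $[t+\delta,T]\cap\{|X^{t,x}|>k\}$ (small by Markov), one obtains that $(\varpi^{in}(t,x))_n$ is Cauchy \emph{pointwise} for every $(t,x)$. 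Combined with the uniform polynomial growth from your Step~2, dominated convergence gives $W^{in}\to W^i$ in $\mathcal{H}^\kappa_T$ for all $\kappa\geq 1$; only then does a standard It\^o-on-the-difference argument yield $Z^{in}$ Cauchy in $\mathcal{H}^2_T$ and $W^{in}$ Cauchy in $\mathcal{S}^2_T$. So the $L^q$-domination is not used as an abstract compactness device but very concretely: it converts weak $L^q$-convergence of $F^{in}$ under the \emph{reference} law $\mu(0,x_0;\cdot)$ into convergence of expectations under \emph{every} law $\mu(t,x;\cdot)$, which is what the pointwise formula for $\varpi^{in}$ needs.
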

\begin{proof} \quad It will be divided into
five steps. We first construct an approximating sequence of BSDEs which have
solutions according to Proposition \ref{relationship of x and yz}, we then provide a priori estimates of the solutions of those BSDEs. Finally 
we prove that those solutions are convergent (at least for a subsequence) and the limit is a solution for the BSDE (\ref{main BSDE starting from 0 a}). 
\medskip

\noindent\textbf{Step 1}: Construction of the approximating sequence
$(W_s^{in;(t,x)}, Z_s^{in;(t,x)})_{s\le T}$, $n\geq 1$, ${i=1,2}$.\medskip

\noindent Let $\xi$ be an element of $C^{\infty}(\R^{2m},\R)$ with
compact support and satisfying
$$\begin{array}{l}
\int_{\R^{2m}}\xi(y,z)dydz=1.\end{array}
$$
\noindent For $n\geq 1$, $i=1,2$ and $(t,x,z^1,z^2)\in
[0,T]\times \R^{3m}$, we set
\begin{align*}
\underbar{H}&_i^{n}\left(t,
x,z^1,\left(u_1^*,u_2^*\right)\left(t,x,z^1,z^2\right)\right)\\
&=\int_{\R^{2d}}n^2H_i\left(t,\varphi_n(x),y,\left(u_1^*,u_2^*\right)\left(t,\varphi_n(x),y,z\right)\right)\xi\left(n\left(z^1-y\right),n\left(z^2-z\right)\right)dydz,
\end{align*}
\noindent where for any $x=(x_j)_{1\le j\le m}\in \R^m$, $\varphi_n(x):=((x_j\vee (-n))\wedge
n)_{1\le j\le m}.$
\medskip

\noindent We next define $\psi\in C^{\infty}(\R^{2m},\R)$ satisfying 
\begin{equation*}
\psi(y,z)=\left\{
\begin{aligned}
1\mbox{ if }\quad \abs{y}^2+\abs{z}^2\leq 1,\\
0\mbox{ if }\quad \abs{y}^2+\abs{z}^2\geq 4.
\end{aligned}
\right.
\end{equation*}
\noindent Then, for $i=1,2$, the measurable functions $H_i^{n}$,
$n\geq 1$, defined as follows:
\begin{align*}
H_i^{n}\left(t,x,z^1,z^2\right):=
\psi\left(\frac{z^1}{n},\frac{z^2}{n}\right)&\underbar{H}_i^{n}\left(t,x,z^1,\left(u_1^*,u_2^*\right)\left(t,x,z^1,z^2\right)\right),\\
&(t,x,z^1,z^2)\in
[0,T]\times \R^{3m}, \,\,
\end{align*}
\noindent satisfy the following properties:
\begin{enumerate}
  \item [(a)] $H_i^{n}$ is uniformly Lipschitz w.r.t $(z^1,z^2)$ ;
  \item [(b)] $\abs{H_i^{n}\left(t,x,z^1,z^2\right)}\leq
  C\left(1+\abs{\varphi_n(x)}\right)
  \abs{z^i}+C\left(1+\abs{\varphi_n(x)}^{\gamma}\right)$.
  \item [(c)]
  $\abs{H_i^{n}\left(t,x,z^1,z^2\right)}\leq
  c_n$, for any $(t, x,z^1,z^2)$.
  \item [(d)]For any $(t,x)\in [0,T]\times \R^m$ and $K$ a
  compact subset of $\R^{2m}$,
  \begin{align*}
  \sup_{(z^1,z^2)\in
  K}&\!\!|H_i^{n}\left(t,x,z^1,z^2\right)\!-\!H_i\left(t,x,z^1,\left(u_1^*,u_2^*\right)\left(t,x,z^1,z^2\right)\right)|\!\rightarrow\!
  0, \text{as}\  n\rightarrow \infty.
  \end{align*}
\end{enumerate}

Let us notice that (b) is valid since $u_1^*$ and $u_2^*$ take their values in compact sets.  

The constant $\gamma$, which we can choose greater than 1, is the one of polynomial growth of $h_i$, $i=1,2$. 
Therefore, from points $(a)$ and (c) and Proposition \ref{relationship of x and yz}, for each $n\geq 1$, $i=1,2$ and $(t,x)\in \esp$, there exist solutions $(W_s^{in;(t,x)},
Z_s^{in;(t,x)})_{s\leq T}$ in $\mathcal{S}_{t,T}^2(\R)\times
\mathcal{H}_{t,T}^2(\R^m)$ such that for any $s\in [t,T]$, 
\begin{equation}\label{4.4}
W_s^{in;(t,x)}= g^i(X_T^{t,x})+\int_s^T H_i^n(r, X_r^{t,x}, Z_r^{1n;(t,x)},
Z_r^{2n;(t,x)})dr- \int_s^T Z_r^{in;(t,x)} dB_r.
\end{equation}
\noindent Then, by Proposition \ref{relationship of x and yz}, for
$i=1,2$, there exists a sequence of measurable deterministic functions $\varpi^{in}$: $[0,T]\times \R^m \rightarrow \R$ and $\upsilon^{in}$:
$[0,T]\times \R^m \rightarrow \R^m$ such that
\begin{equation}\label{uin vin}
\forall s\in [t,T],\,\,
W_{s}^{in;(t,x)}= \varpi^{in}(s, X_s^{t,x}) \quad \text{and}\quad Z_s^{in;(t,x)}=
\upsilon^{in}(s, X_s^{t,x}).
\end{equation}
\noindent Moreover, for $i=1,2$ and $n\geq 1$, $\varpi^{in}$ satisfies
\begin{equation*}
\varpi^{in}(t,x)=\be\Big[g^i(X_T^{t,x})+\int_t^TF^{in}
(s,X_s^{t,x})ds\Big],\quad
\forall (t,x)\in [0,T]\times \R^m,
\end{equation*}
\noindent with 
\begin{equation*}
F^{in}(t,x)= H_i^n\left(t,x,\upsilon^{1n}(t,x),\upsilon^{2n}(t,x)\right), \,\,(t,x)\in [0,T]\times \R^m.
\end{equation*}

\noindent\textbf{Step 2}: The deterministic functions $\varpi^{in}$ are of polynomial growth uniformly w.r.t. $n$, i.e., there exist two constants $C$ and $\lambda$ such that for any $n\ge 1$, $i=1,2$, 
\begin{equation}\label{growthu}\forall (t,x)\in \esp,\,\,
|\varpi^{in}(t,x)|\leq C(1+|x|^\lambda).
\end{equation}

We will deal with the case of index $i=1$, the case of $i=2$ can be treated in the same way. 
For each $n\geq 1$, let us consider the following BSDE: $\forall s\in [t,T],$
\begin{equation}\label{bsde tilde}
\left\{\begin{array}{l}
(\bar{W},\bar{Z})\in \mathcal{S}_{t,T}^2(\R)\times
\mathcal{H}_{t,T}^2(\R^m)\,;\\
\bar{W}_s^{1n}\!=\! g^1(X_T^{t,x})\!+\!\int_s^T C(1+|\varphi_n(X_r^{t,x})|)|\bar{Z}_r^{1n}|\!+\!C(1+\abs{\varphi_n(X_r^{t,x})}^{\gamma})dr\!-\! \int_s^T \bar{Z}_r^{1n}dB_r.\end{array}
\right.
\end{equation}
For any $x \in \R^m$ and $n\ge 1$, the function \\$z^1\in \R^m \longmapsto C(1+|\varphi_n(X^{t,x}_s)|)|z^1|+C(1+|\varphi_n(X^{t,x}_s)|^{\gamma})$ is Lipschitz continuous. Then the solution $(\bar{W}^{1n},\bar{Z}^{1n})$ exists and is unique. Moreover through an adaptation of the result given in (El Karoui and al.,1997,\cite{el1997backward a}), we can infer the existence of deterministic measurable function $\bar{\varpi}^{1n}$: $[0,T]\times
\R^m \rightarrow \R$ such that, for any $s\in[t,T]$,
\begin{equation}\label{estimubarn}
\bar{W}_s^{1n}= \bar{\varpi}^{1n}(s, X_s^{t,x}).
\end{equation}
Next let us consider the process 
$$B^n_s= B_s- \int_0^s 1_{[t,T]}(r)C(1+|\varphi_n(X_r^{t,x})|)\mbox{sign}(\bar{Z}_r^{1n})dr,\,\, 0\leq s\leq T,$$
which is, thanks to Girsanov's Theorem, a Brownian motion under the probability $\bP^n$ on $(\Omega,
\mathcal{F})$ whose density with respect to $\bP$ is
$$\zeta_T\{C(1+|\varphi_n(X_s^{t,x})|)\mbox{sign}(\bar{Z}_s^{1n})1_{[t,T]}(s)\},$$ 
where for any $z=(z^i)_{i=1,...,d}\in \R^m$, $\mbox{sign}(z)=(1_{[|z^i|\neq 0]}\frac{z^i}{|z^i|})_{i=1,...,d}$ and  $\zeta_T(\cdot)$ is
defined by (\ref{density function}). Then (\ref{bsde tilde}) becomes
\begin{equation*}
\bar{W}_s^{1n}= g^1(X_T^{t,x})+\int_s^T
C(1+|\varphi_n(X_r^{t,x})|^{\gamma})dr-
\int_s^T \bar{Z}_r^{1n} dB^n_r, \quad 0\leq s\leq T.
\end{equation*}
\noindent Therefore, taking into 
account of (\ref{estimubarn}), we deduce,
\begin{equation*}
\bar{\varpi}^{1n}(t,x)= \be^n\Big[g^1(X_T^{t,x})+\int_t^T
C\left(1+\abs{\varphi_n(X_s^{t,x})}^{\gamma}\right)ds|\mathcal{F}_t\Big],
\end{equation*}

\noindent where $\be^n$ is the expectation under probability $\bP^n$.
Taking the expectation on both sides under the probability $\bP^n$
and considering $\bar{\varpi}^{1n}(t,x)$ is deterministic, one obtains,
\begin{equation*}
\bar{\varpi}^{1n}(t,x)= \be^n\Big[g^1(X_T^{t,x})+ \int_t^T
C\left(1+\abs{\varphi_n(X_s^{t,x})}^{\gamma}\right)ds\Big].
\end{equation*}
\noindent Then by the Assumption (A2)-(iii) we have: $\forall (t,x)\in \esp$, 
\begin{equation*}
\begin{aligned}
\abs{\bar{\varpi}^{1n}(t,x)}&\leq C\be^n\Big[\sup_{0\leq s\leq T}\left(1+ \abs{X_s^{t,x}}^{\gamma}\right)\Big]\\
        &= C\be\Big[\big(\sup_{0\leq s\leq T}\left(1+ \abs{X_s^{t,x}}^{\gamma}\right)\big)\Big(\zeta_T(C\left(1+\left|\varphi_n\left(X_s^{t,x}\right)\right|\right)\mbox{sign}(\bar{Z}_s^{1n}))\Big)\Big]. \end{aligned}
\end{equation*}

\noindent By Lemma \ref{density function lp bounded}, there exists
some $1<p_0<2$ (which does not depend on $(t,x)$), such that,
\begin{equation}\label{4.5}
\be\Big[\Big(\zeta_T(C\left(1+\left|\varphi_n\left(X_s^{t,x}\right)\right|\right)\mbox{sign}(\tilde{Z}_s^{1n}))\Big)^{p_0}\Big]<\infty.
\end{equation}

\noindent Then thanks to Young's inequality, we obtain,
\begin{align*}
&\abs{\bar{\varpi}^{1n}(t,x)}\\
&\leq C\{\be\Big[\sup_{0\leq s\leq T}\left(1+
        {\abs{X_s^{t,x}}}^{\gamma}\right)^{\frac{p_0}{p_0-1}}\Big]+\be\Big[\big(\zeta_T(C\left(1+\left|\varphi_n\left(X_s^{t,x}\right)\right|\right)\mbox{sign}(\tilde{Z}_s^{1n}))\big)^{p_0}\Big]\}.
\end{align*}
Finally estimates (\ref{4.5}) and (\ref{estimate of X}) yield
that,
\begin{equation*}
\abs{\bar{\varpi}^{1n}(t,x)}\leq C(1+ \abs{x}^{\lambda}),
\end{equation*}

\noindent where $\lambda= \frac{\gamma p_0}{p_0-1}>2$. Next taking into account point (b) and using comparison of solutions of BSDEs (\cite{el1997backward a}, pp.23) we obtain for any $s\in [t,T]$,
$$\bar W^{1n}_s=\bar \varpi^{1n}(s, X^{t,x}_s)\ge 
W^{1n;(t,x)}_s=\varpi^{1n}(s, X^{t,x}_s), \forall s\in [t,T],
$$
and then, choosing $s=t$, we get that $\varpi^{1n}(t,x)\leq C(1+ \abs{x}^{\lambda})$, $(t,x)\in \esp$. But in a similar way one can show that for any $(t,x)\in \esp$, \\$\varpi^{1n}(t,x)\geq -C(1+ \abs{x}^{\lambda})$. Therefore $\varpi^{1n}$ is of polynomial growth w.r.t. $(t,x)$ uniformly in $n$, i.e., it satisfies (\ref{growthu}).$\Box$
\ms

\noindent\textbf{Step 3}: There exists a constant $C$ independent of $n$ and 
$t,x$ such that for any $t\leq T$, for $i=1,2$,
\begin{equation}\label{estimate of Z}
\be[\int_t^T|Z_s^{in;(t,x)}|^2ds]\le C.
\end{equation}
Actually, we obtain from estimates (\ref{growthu}) and (\ref{estimate of X1}) that for any $\alpha\ge 1$, $i=1,2$,
$$
\be [\sup_{t\leq s\leq T}|W^{in;(t,x)}_s|^\alpha]\leq C.
$$
\noindent Going back to equation (\ref{4.4}) and making use of It\^o's formula with
$(W_s^{1n;(t,x)})^2$, we obtain, in a standard way, the result (\ref{estimate of Z}). The proof is omitted for conciseness.$\Box$
\bs

\noindent \textbf{Step 4}: There exists a subsequence of $((W_s^{in;(0,x_0)},
Z_s^{in;(0,x_0)})_{s\in [0,T]})_{n\geq 1}$, $i=1,2$, which converges respectively to 
$(W_s^i, Z_s^i)_{0\leq s\leq
T}$, $i=1,2$, solution of the BSDE (\ref{main BSDE starting from 0 a}).
\smallskip

\no Actually for $i=1,2$ and $n\geq 1$, by (\ref{uin vin}) we know that 
$$
W_s^{in;(0,x_0)}=\varpi^{in}(s, X^{0,x_0}_s), s\leq T 
$$ where the deterministic functions $\varpi^{in}$ verifies:  
\begin{equation}\label{main th u^1n}\forall (t,x)\in \esp,\,\,
\varpi^{in}(t,x)= \be\Big[g^i(X_T^{t,x})+\int_t^T
F^{in}(s,
X_s^{t,x})ds\Big].
\end{equation}

\noindent Let us now fix $q\in (1,2)$. Taking account of point $(b)$, we have: 
\begin{align*}
\be\Big[\int_0^T\abs{F^{in}&\left(s, X_s^{0,x_0}\right)}^qds\Big]=\int_{\esp}\abs{F^{in}(s,y)}^q\mu(0,x_0;s,dy)ds\\
&\leq
C\be\Big[\int_0^T\abs{Z_s^{in;(0,x_0)}}^q\left(1+\abs{X_s^{0,x_0}}^q\right)+
\left(1+\abs{X_s^{0,x_0}}^{\gamma
q}\right)ds\Big].
\end{align*}
\noindent By H\"older and Young's inequalities, one can
show that,
\begin{align}\label{l^p bound of F^1n}
\be\Big[\int_0^T\abs{F^{in}&\left(s, X_s^{0,x_0}\right)}^qds\Big]\nonumber\\
&\leq
C\be\Big[\big(\int_0^T\abs{Z_s^{in;(0,x_0)}}^2ds\big)^{\frac{q}{2}}\big(\int_0^T\left(1+
\abs{X_s^{0,x_0}}\right)^{\frac{2q}{2-q}}ds\big)^{\frac{2-q}{2}}\Big]\nonumber\\
&\quad +C\be\Big[\int_0^T\left(1+\abs{X_s^{0,x_0}}^{\gamma q}\right)ds\Big]\nonumber\\
&\leq
C\{\be\Big[\int_0^T\abs{Z_s^{in;(0,x_0)}}^2ds\Big]+
\be\Big[1+\sup_{0\le s \le T}|X^{0,x_0}_s|^\theta]\Big]\}
\end{align} for constant $\theta=(\gamma q)\vee\frac{2q}{2-q}$ which is greater than 2 with $1<q<2$ and $\gamma>1$. Taking now into account the estimates 
(\ref{estimate of Z}) and (\ref{estimate of
X1}) we deduce that, $$\be\Big[\int_0^T\abs{F^{in}\left(s, X_s^{0,x_0}\right)}^qds\Big]=\int_{\esp}\abs{F^{in}(s,y)}^q\mu(0,x_0;s,dy)ds\leq
C.
$$As a result, there exists a
subsequence $\{n_k\}$ (for notational simplification, we still denote
it by \{$n$\}) and two $\mathcal{B}([0,T])\otimes \mathcal{B}(\R^m)$-measurable deterministic functions $F^i(s,x)$, $i=1,2$, such that,
\begin{equation}\label{F^1n is lp weakly convergence}
F^{in}\rightarrow F^i \quad \text{weakly in}\quad L^q([0,T]\times
\R^m; \mu(0,x_0;s,dx)ds).\end{equation}

Next we aim to prove that $(\varpi^{in}(t,x))_{n\geq 1}$ is a Cauchy
sequence for each $(t,x)\in \esp$, i=1,2. 

\noindent So let $(t,x)$ be fixed, $\delta>0$, $k$,
$n$ and $m\geq1$ be integers. From (\ref{main th u^1n}), we have,
\begin{equation*}
\begin{aligned}
\abs{\varpi^{in}(t,x)-\varpi^{im}(t,x)}&=
\Big|\be\Big[\int_t^T\left[F^{in}(s,X_s^{t,x})-F^{im}(s,X_s^{t,x})\right]ds\Big]\Big|\\
&\leq
\Big|\be\Big[\int_t^{t+\delta}\left[F^{in}(s,X_s^{t,x})-F^{im}(s,X_s^{t,x})\right]ds\Big]\Big|\\
&\quad +\Big|\be\Big[\int_{t+\delta}^T\left(F^{in}(s,X_s^{t,x})-F^{im}(s,X_s^{t,x})\right).1_{\{\abs{X_s^{t,x}}\leq
k\}}ds\Big]\Big|\\
&\quad +\Big|\be\Big[\int_{t+\delta}^T\left(F^{in}(s,X_s^{t,x})-F^{im}(s,X_s^{t,x})\right).1_{\{\abs{X_s^{t,x}}>
k\}}ds\Big]\Big|.
\end{aligned}
\end{equation*}
On the right side, according to (\ref{l^p bound of
F^1n}), we have,
\begin{equation*}
\begin{aligned}
\be\Big[\int_t^{t+\delta}&\abs{F^{in}(s, X_s^{t,x})- F^{im}(s,
X_s^{t,x})}ds\Big]\\
&\leq
\delta^{\frac{q-1}{q}}\Big\{\be\Big[\int_t^T\abs{F^{in}(s,X_s^{t,x})-F^{im}(s,X_s^{t,x})}^qds\Big]\Big\}^{\frac{1}{q}}\\
&\leq C\delta^{\frac{q-1}{q}}.
\end{aligned}
\end{equation*}

\noindent At the same time, thanks to Corollary \ref{cor dom} associated to $L^{\frac{q}{q-1}}$-domination implies that:
\begin{equation*}
\begin{aligned}
\big|\be\big[\int_{t+\delta}^T&\left(F^{in}(s,X_s^{t,x})-F^{im}(s,X_s^{t,x})\right).1_{\{\abs{X_s^{t,x}}\leq
k\}}ds\big]\big|\\
&=
\big|\int_{\R^m}\int_{t+\delta}^T(F^{in}(s,\eta)-F^{im}(s,\eta)).1_{\{\abs{\eta}\leq
k\}}\mu(t,x;s,d\eta)ds\big|\\
&=
\big|\int_{\R^m}\int_{t+\delta}^T(F^{in}(s,\eta)-F^{im}(s,\eta)).1_{\{\abs{\eta}\leq
k\}}\phi_{t,x}(s,\eta)\mu(0,x_0;s,d\eta)ds\big|.
\end{aligned}
\end{equation*}

\noindent Since $\phi_{t,x}(s,\eta) \in L^{\frac{q}{q-1}}([t+\delta,
T]\times [-k, k]^m$; $\mu(0,x_0; s, d\eta)ds)$, for $k\geq 1$, it
follows from (\ref{F^1n is lp weakly convergence}) that for each $(t,x)\in \esp$, we have,
\begin{equation*}
\be\Big[\int_{t+\delta}^T\left(F^{in}(s, X_s^{t,x})-F^{im}(s,
X_s^{t,x})\right)1_{\{\abs{X_s^{t,x}}\leq
k\}}ds\Big]\rightarrow 0 \mbox{ as }n,m\rw \infty.
\end{equation*}
\noindent Finally,
\begin{equation*}
\begin{aligned}
\big|\be\big[&\int_{t+\delta}^T\left(F^{in}(s,X_s^{t,x})-F^{im}(s,X_s^{t,x})\right).1_{\{\abs{X_s^{t,x}}>
k\}}ds\big]\big|\\
&\leq C\Big\{\be\Big[\int_{t+\delta}^T
1_{\left\{\abs{X_s^{t,x}}>k\right\}}ds\Big]\Big\}^{\frac{q-1}{q}}\Big\{\be\Big[\int_{t+\delta}^T\abs{F^{in}(s,X_s^{t,x})-F^{im}(s,X_s^{t,x})}^qds\Big]\Big\}^{\frac{1}{q}}\\
&\leq Ck^{-\frac{q-1}{q}}
\end{aligned}
\end{equation*}

Therefore, $(\varpi^{in}(t,x))_{n\geq 1}$ is a Cauchy sequence for each $(t,x)\in \esp$ and then there
exists a measurable application $\varpi^i$ on $[0,T]\times \R^m$, $i=1,2,$ such that
for each $(t,x)\in \esp$, $i=1,2,$
\begin{equation*}
\lim_{n\rightarrow \infty}\varpi^{in}(t,x)=\varpi^i(t,x).
\end{equation*}
Additionally we obtain from estimate 
(\ref{growthu}) that $\varpi^i$ is of polynomial growth, i.e., 
\begin{equation}\label{croipoly}\forall (t,x)\in \esp,\,
|\varpi^i(t,x)|\leq C(1+|x|^\lambda).
\end{equation}
Therefore for any $ t\geq 0$.
\begin{equation*}
\lim_{n\rightarrow \infty} W_t^{in;(0,x_0)}(\omega)= \varpi^i(t,X_t^{0,x_0}(\omega)) \mbox{ , }\abs{W_t^{in;(0,x_0)}(\omega)}\leq C(1+|X_t^{0,x_0}(\omega)|^{\lambda}),\,\,\bp-a.s.
\end{equation*}
By Lebesgue's dominated convergence theorem,
$((W_t^{in;(0,x_0)})_{t\leq T})_{n\geq 1}$ converges to $W^i=(\varpi^i(t, X_t^{0,x_0}))_{t\leq
T}$ in $\mathcal{H}_T^{\kappa}(\R)$ for any $\kappa
\geq 1$, that is, for $i=1,2$,
\begin{equation}\label{w1n hk cov}
\be\Big[\int_0^T \abs{W^{in;(0,x_0)}_s-W_s^i}^\kappa ds\Big]\rightarrow 0,
\qquad \text{as}\ n\rightarrow \infty.
\end{equation}

We next show that $(W^{in;(0,x_0)})_{n\geq 0}$ is convergent in $\mathcal{S}^2_T(\R)$, $i=1,2$, as well. But first let us show that for $i=1,2$, the sequence $(Z^{in;(0,x_0)}=(\upsilon^{in}(t, X_t^{0,x_0}))_{t\leq
T})_{n\geq 1}$ has a limit in $\mathcal{H}_T^2(\R^m)$. As usual, we only deal with the case $i=1$. For $n,m \geq 1$ and $s\leq T$, using It\^{o}'s formula
with $(W_s^{1n}-W_s^{1m})^2$ (we omit the subscript $(0,x_0)$ for convenience) and considering point $(b)$, we get,
\begin{align*}
\big|&W_s^{1n}-W_s^{1m}\big|^2+ \int_s^T \abs{Z_r^{1n}-Z_r^{1m}}^2dr\\
&= 2\int_s^T\left(W_r^{1n}-W_r^{1m}\right)\left(H_1^n\left(r,
X_r^{0,x_0},Z_r^{1n},Z_r^{2n}\right) - H_1^m\left(r,
X_r^{0,x_0},Z_r^{1m},Z_r^{2m}\right)\right)dr\\
&\quad -2\int_s^T(W_r^{1n}-W_r^{1m})(Z_r^{1n}-Z_r^{1m})dB_r\\
&\leq C\int_s^T
\abs{W_r^{1n}-W_r^{1m}}\left(\left(\abs{Z_r^{1n}}+\abs{Z_r^{1m}}\right)\left(1+\abs{X_r^{0,x_0}}\right)+\left(1+\abs{X_r^{0,x_0}}^{\gamma}\right)\right)dr\\
&\quad -2\int_s^T(W_r^{1n}-W_r^{1m})(Z_r^{1n}-Z_r^{1m})dB_r.
\end{align*}

\noindent But for any $x,y,z\in \R$, $|xyz|\leq \frac{1}{p}|x|^p+\frac{1}{q}|y|^q+\frac{1}{r}|z|^r$ with 
$\frac{1}{p}+\frac{1}{q}+\frac{1}{r}=1$. Then for any $\epsilon >0$ we have,
\begin{equation}\label{ineq5}
\begin{array}{ll}
\big|W_s^{1n}-W_s^{1m}\big|^2+ \int_s^T \abs{Z_r^{1n}-Z_r^{1m}}^2dr\\\\
\quad \leq \!C\!\Big\{\frac{\epsilon^2}{2}
\int_s^T\!
(\abs{Z_r^{1n}}\!+\!\abs{Z_r^{1m}})^2dr+
\frac{\epsilon^4}{4}
\int_s^T\left(1\!+\!\abs{X_r^{0,x_0}}\right)^4dr\!+\!
\frac{1}{4\epsilon^8}\int_s^T|W_r^{1n}\!-\!W_r^{1m}|^4dr+
\\\\\qquad\int_s^T|W_r^{1n}-W_r^{1m}|
\left(1+\abs{X_r^{0,x_0}}^{\gamma}\right)
dr \Big\}-2\int_s^T(W_r^{1n}-W_r^{1m})(Z_r^{1n}-Z_r^{1m})dB_r.
\end{array}
\end{equation}
Taking now $s=0$, expectation on both sides and the limit w.r.t. $n$ and $m$ we deduce that, 
\[
\limsup_{n,m\rightarrow \infty}\be[\int_0^T \abs{Z_r^{1n}-Z_r^{1m}}^2dr]\leq C\{\frac{\epsilon^2}{2}+\frac{\epsilon^4}{4}\}
\]
\no due to estimates (\ref{estimate of Z}), (\ref{estimate of X1}) and the convergence 
of (\ref{w1n hk cov}). As $\epsilon$ is arbitrary then the sequence $(Z^{1n})_{n\geq 0}$ is convergent in $\mathcal{H}^2_T(\R^m)$ to a process $Z^1$.
\ms

Next once more going back to inequality (\ref{ineq5}), taking the supremum and using BDG's inequality we deduce that,
\begin{align*}
&\be[\sup_{0\leq s\leq T}\big|W_s^{1n}-W_s^{1m}\big|^2+ \int_0^T \abs{Z_r^{1n}-Z_r^{1m}}^2dr]\\
&\!\leq C\be\!\big\{\!\frac{\epsilon^2}{2}
\int_0^T\!
(\abs{Z_r^{1n}}\!+\!\abs{Z_r^{1m}})^2dr\!+\!
\frac{\epsilon^4}{4}
\int_0^T\left(1\!+\!\abs{X_r^{0,x_0}}\right)^4dr\!+\!
\frac{1}{4\epsilon^8}\int_0^T|W_r^{1n}\!-\!W_r^{1m}|^4dr
\\&\quad +\int_0^T|W_r^{1n}-W_r^{1m}|
\left(1+\abs{X_r^{0,x_0}}^{\gamma}\right)
ds \big\}+
\frac{1}{4}
\be[\sup_{0\leq s\leq T}\big|W_s^{1n}-W_s^{1m}\big|^2]\\
&\quad+4\be[
\int_0^T \abs{Z_r^{1n}-Z_r^{1m}}^2dr].
\end{align*} which implies that,
\begin{align*}
\limsup_{n,m\rightarrow \infty}\be[\sup_{0\leq s\leq T}\big|W_s^{1n}-W_s^{1m}\big|^2]=0
\end{align*}
since $\epsilon$ is arbitrary. Thus the sequence of processes $(W^{1n})_{n\geq 1}$ converges in $\mathcal{S}^2_T(\R)$ to $W^1$ which is a continuous process. 

Finally note that we can do the same for $i=2$, i.e., we have also the convergence of 
$(Z^{2n})_{n\geq 0}$ (resp. $(W^{2n})_{n\geq 0}$) in $\mathcal{H}^2_T(\R^m)$ (resp. $\mathcal{S}^2_T(\R)$) to $Z^2$ (resp. $(W_t^2=\varpi^2(t,X^{0,x_0}_t))_{t\leq T}$). $\Box$
\medskip

\noindent \textbf{Step 5}: The limit processes $(W_s^i, Z_s^i)_{s\leq T}$, $i=1,2$, are solutions of BSDE (\ref{main BSDE starting from 0 a}). Indeed, we need to prove that (for case $i=1$),
\begin{equation}
\begin{aligned}
F^1(t, X_t^{0,x_0})= H_1\big(t,X_t^{0,x_0}, Z_t^1,
\left(u_1^*,u_2^*\right)(t,X_t^{0,x_0},Z_t^1,Z_t^2)\big)\quad
dt\otimes d\bP- a.s.
\end{aligned}
\end{equation}

\noindent For $k\geq 1$, we have,
\begin{eqnarray}\label{last step eqnarray}
&\!\!\!\!\!&\be\!\Big[\!\int_0^T\!\big|H_1^n(s, X_s^{0,x_0},
Z_s^{1n},Z_s^{2n})\!-\!H_1(s, X_s^{0,x_0}, Z_s^1, (u_1^*,u_2^*)(s,X_s^{0,x_0}, Z_s^1, Z_s^2))\big|ds\!\Big]\nonumber\\
= & & \be\Big[\int_0^T \big|H_1^n(s, X_s^{0,x_0}, Z_s^{1n},Z_s^{2n})\nonumber\\
& &\qquad \qquad -H_1(s, X_s^{0,x_0}, Z_s^{1n}, \left(u_1^*,u_2^*\right)(s,X_s^{0,x_0}, Z_s^{1n}, Z_s^{2n}))\big|\cdot1_{\{\abs{Z_s^{1n}}+\abs{Z_s^{2n}}< k\}}ds\Big]\nonumber\\
&+& \be\Big[\int_0^T \big|H_1^n(s, X_s^{0,x_0}, Z_s^{1n},Z_s^{2n})\nonumber\\
& &\qquad \qquad -H_1(s, X_s^{0,x_0}, Z_s^{1n}, \left(u_1^*,u_2^*\right)(s,X_s^{0,x_0}, Z_s^{1n}, Z_s^{2n}))\big|\cdot1_{\{\abs{Z_s^{1n}}+\abs{Z_s^{2n}}\geq k\}}ds\Big]\nonumber\\
&+& \be\Big[\int_0^T \big|H_1(s, X_s^{0,x_0}, Z_s^{1n}, \left(u_1^*,u_2^*\right)(s,X_s^{0,x_0}, Z_s^{1n}, Z_s^{2n}))\nonumber\\
& &\qquad \qquad -H_1(s, X_s^{0,x_0}, Z_s^{1}, \left(u_1^*,u_2^*\right)(s,X_s^{0,x_0}, Z_s^{1}, Z_s^{2}))\big|ds\Big].\nonumber\\
\end{eqnarray}

\noindent The first term converges to 0. Indeed, on one hand, for $n\geq 1$, point $(b)$ implies that,
\begin{align*}
|H_1^n&(s,\! X_s^{0,x_0}, \!Z_s^{1n},\!Z_s^{2n}\!)\!-\!H_1(s, \!X_s^{0,x_0},\! Z_s^{1n}, \!\left(u_1^*,\!u_2^*\right)(s,\!X_s^{0,x_0},\! Z_s^{1n},\! Z_s^{2n}))|\!\cdot\!1_{\{\abs{Z_s^{1n}}+\abs{Z_s^{2n}}< k\}}\\
&\qq\leq 
C(1+\abs{X_s^{0,x_0}})k+C(1+|X_s^{0,x_0}|^{\gamma}).
\end{align*}

\noindent On the other hand, considering point $(d)$, we obtain that,
\begin{align*}
&|H_1^n\left(\!s,\! X_s^{0,x_0}, \!Z_s^{1n},\! Z_s^{2n}\right)\!-\!H_1\left(\!s,\! X_s^{0,x_0}, \!Z_s^{1n}, \!\left(u_1^*,u_2^*\right)\!\left(\!s,\!X_s^{0,x_0},\! Z_s^{1n},\! Z_s^{2n}\!\right)\right)\!|\!\cdot\!1_{\{\abs{Z_s^{1n}}+\abs{Z_s^{2n}}< k\}}\\
&\qq\qq \leq \sup_{\{(z_s^1, z_s^2), \ \abs{z_s^1}+\abs{z_s^2}\le  k\}} |H_1^n\left(s, X_s^{0,x_0}, z_s^{1}, z_s^{2}\right)\\
&\qquad \qquad \qquad \qquad -H_1\left(s, X_s^{0,x_0}, z_s^1, \left(u_1^*,u_2^*\right)\left(s,X_s^{0,x_0}, z_s^1, z_s^2\right)\right)|\rightarrow 0 \mbox{ as }n\rw \infty.
\end{align*}

\noindent Then thanks to Lebesgue's dominated convergence theorem, the first term in (\ref{last step eqnarray}) converges to 0 in $\mathcal{H}^1_T(\R)$.
\medskip

\noindent The second term is bounded by $\frac{C}{k^{2(q-1)/q}}$ with $q\in (1,2)$.
Actually, from point $(b)$ and Markov's inequality, for $1<q<2$ 
we have, 
\begin{eqnarray*}
& &\be\Big[\int_0^T \big|H_1^n\left(s, X_s^{0,x_0}, Z_s^{1n}, Z_s^{2n}\right)\nonumber\\
& &\qquad \qquad -H_1\left(s, X_s^{0,x_0}, Z_s^{1n}, \left(u_1^*,u_2^*\right)\left(s,X_s^{0,x_0}, Z_s^{1n}, Z_s^{2n}\right)\right)\big|\cdot1_{\{\abs{Z_s^{1n}}+\abs{Z_s^{2n}}\geq k\}}ds\Big]\nonumber\\
&\leq&\!\! C\!
\Big\{\!\be\Big[\!\!\int_0^T\!\!\!\left(1\!+\!\abs{X_s^{0,x_0}}\right)^q\abs{Z_s^{1n}}^q\!+\!\left(1\!+\!\abs{X_s^{0,x_0}}^{\gamma}\right)^q\!\!ds\Big]\Big\}^{\frac{1}{q}}\!\Big\{\!\be\Big[\!\!\int_0^T\!\!\!1_{\{\abs{Z_s^{1n}}+\abs{Z_s^{2n}}\geq
k\}}ds\Big]\Big\}^{\frac{q-1}{q}}\nonumber\\
&\leq&C\Big\{\be\Big[\int_0^T1_{\{\abs{Z_s^{1n}}+\abs{Z_s^{2n}}\geq
k\}}ds\Big]\Big\}^{\frac{q-1}{q}}\nonumber\\
&\leq& \frac{C}{k^{\frac{2(q-1)}{q}}}.
\end{eqnarray*}
The second inequality is obtained by Young's inequality and estimates (\ref{estimate of Z}) and (\ref{estimate of X1}). 

The third term in (\ref{last step eqnarray}) also
converges to 0, at least for a subsequence. Actually, since the sequence $(Z^{1n})_{n\geq 1}$
converges to $Z^1$ in $\mathcal{H}_T^2(\R^m)$, there exists a subsequence $(Z^{1n_k})_{k\geq 0}$ which converges to $Z^1$, $dt\otimes d\bp$-a.e and such that $\sup_{k\geq 0}|Z^{1n_k}_t(\omega)|$ belongs to $\mathcal{H}_T^2(\R)$. Therefore, taking the continuity Assumption (A3)-(ii) of $H_1(t,x,p,
(u_1^*, u_2^*)(t,x,p,q))$ w.r.t $(p,q)$, we obtain that,
\begin{align*}
H_1&\left(s, X_s^{0,x_0}, Z_s^{1n_k},
\left(u_1^*,u_2^*\right)\left(s,X_s^{0,x_0}, Z_s^{1n_k},
Z_s^{2n_k}\right)\right)\\
&\qquad \longrightarrow_{k\rw \infty} H_1\left(s, X_s^{0,x_0}, Z_s^{1},
\left(u_1^*,u_2^*\right)\left(s,X_s^{0,x_0}, Z_s^{1},
Z_s^{2}\right)\right)\,\,  dt\otimes d\bp-a.e.
\end{align*}
\noindent and the process 
\begin{equation*}\sup_{k\geq 0}|H_1\left(s,
X_s^{0,x_0}, Z_s^{1n_k}, \left(u_1^*,u_2^*\right)\left(s,X_s^{0,x_0},
Z_s^{1n_k}, Z_s^{2n_k}\right)\right)| \in \mathcal{H}_T^q(\R).
\end{equation*}Then once more 
by the dominated convergence theorem, we obtain,
\begin{align*}
H_1&\left(s, X_s^{0,x_0}, Z_s^{1n_k},
\left(u_1^*,u_2^*\right)\left(s,X_s^{0,x_0}, Z_s^{1n_k},
Z_s^{2n_k}\right)\right) \\
&\longrightarrow_{k\rw \infty} H_1\left(s, X_s^{0,x_0}, Z_s^{1},
\left(u_1^*,u_2^*\right)\left(s,X_s^{0,x_0}, Z_s^{1},
Z_s^{2}\right)\right)\  \text{in}\  \mathcal{H}_T^q(\R),
\end{align*}
\noindent which yields to the convergence of the third term in
(\ref{last step eqnarray}).\textbf{}
\medskip

\noindent It follows that the sequence $((H_1^n(s,X_s^{0,
x_0},Z_s^{1n},Z_s^{2n})_{s\leq T})_{n\geq 1}$ converges to\\ $(H_1(s,
X_s^{0, x_0},Z_s^1,(u_1^*,u_2^*)(s, X_s^{0,x_0}, Z_s^1,Z_s^2)))_{s\leq T}$
in $L^1([0,T]\times \Omega, dt\otimes d\bP)$ and then $$F^1(s,
X_s^{0,x_0})= H_1(s, X_s^{0, x_0},Z_s^1,(u_1^*,u_2^*)(s, X_s^{0,x_0},
Z_s^1,Z_s^2)), \,\,dt\otimes d\bP-a.e.$$ In the same way we have,
$$F^2(s,
X_s^{0,x_0})= H_2(s, X_s^{0, x_0},Z_s^2,(u_1^*,u_2^*)(s, X_s^{0,x_0},
Z_s^1,Z_s^2)), \,\,dt\otimes d\bP-a.s.$$
Thus the processes $(W^i, Z^i)$, $i=1,2$, is solution of the backward
equation (\ref{main BSDE starting from 0 a}). Finally taking into account of estimate (\ref{croipoly}) and the fact that $Z^i$, $i=1,2$, belong to $\mathcal{H}^2_T(\R^m)$ complete the proof. 
\end{proof}
\bs

As a result of Theorems \ref{bsde we mainly proof} and 
\ref{th main result section 4} we obtain the main result of this paper. 
\begin{theorem}\label{theorem_existence_nep} Assume that (A1), (A2) and (A3) are in force. Then the nonzero-sum differential game defined by (\ref{new equation of x}) and (\ref{cost function}) has a Nash equilibrium point. $\Box$
\end{theorem}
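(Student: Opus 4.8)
The plan is to obtain Theorem \ref{theorem_existence_nep} as a direct consequence of the verification result Theorem \ref{bsde we mainly proof} together with the existence result Theorem \ref{th main result section 4}; no new analysis is required, only a careful matching of the two statements.

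First I would fix the initial data of the game to $(0,x_0)$ and invoke Theorem \ref{th main result section 4}. Under (A1), (A2), (A3) it produces two pairs of $\cP$-measurable processes $(W^i,Z^i)_{s\leq T}$, $i=1,2$, and two measurable deterministic functions $\varpi^i$ of polynomial growth on $\esp$ such that $W^i_s=\varpi^i(s,X^{0,x_0}_s)$ for all $s\leq T$, $Z^i(\omega)$ is $dt$-square integrable $\bp$-a.s. (in fact $Z^i\in\mathcal H^2_T(\R^m)$), and $(W^i,Z^i)$ solves the backward equation (\ref{main BSDE starting from 0 a}). Identifying the feedback map $u^*$ appearing in Theorem \ref{bsde we mainly proof} with the pair $(u_1^*,u_2^*)$ supplied by the generalized Isaacs condition (A3), the system (\ref{main BSDE starting from 0 a}) is exactly the system (\ref{main BSDE}) of Theorem \ref{bsde we mainly proof} with the running initial point $x$ chosen to be $x_0$.

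Next I would check that all the hypotheses of Theorem \ref{bsde we mainly proof} are met: item (i) is precisely (A1), (A2), (A3), which hold by assumption; item (ii) holds because the $\varpi^i$ are deterministic and of polynomial growth by (\ref{croipoly}), $W^i_s=\varpi^i(s,X^{0,x_0}_s)$, each $Z^i$ is square integrable in time, and (\ref{main BSDE}) is satisfied. Hence Theorem \ref{bsde we mainly proof} applies and yields that the feedback control
\[
(u^*_s,v^*_s):=\big(u_1^*(s,X^{0,x_0}_s,Z^1_s,Z^2_s),\,u_2^*(s,X^{0,x_0}_s,Z^1_s,Z^2_s)\big)_{s\leq T}
\]
is admissible, i.e. belongs to $\cM$, and is a Nash equilibrium point for the nonzero-sum differential game associated with (\ref{new equation of x}) and (\ref{cost function}). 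This is exactly the assertion of the theorem.

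The argument is essentially bookkeeping, so I do not expect a genuine obstacle here; the one point that deserves a word of care is the bridge between the two theorems, namely the identification of the single feedback $u^*$ of Theorem \ref{bsde we mainly proof} with the pair $(u_1^*,u_2^*)$ of (A3), and the observation that Theorem \ref{th main result section 4} delivers precisely the regularity (square-integrable $Z^i$ and a polynomial-growth deterministic representation of $W^i$) that Theorem \ref{bsde we mainly proof} demands. All the substantive work — the weak formulation under only linear growth, the BSDE representation of the payoffs in Proposition \ref{Lemma BSDE solution existence wrt uv}, the Lipschitz approximation scheme, the uniform a priori and $L^q$-domination estimates, and the passage to the limit — has already been carried out in Sections 3 and 4, and nothing further is needed.
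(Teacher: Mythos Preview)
Your proposal is correct and matches the paper's own treatment exactly: the paper presents Theorem \ref{theorem_existence_nep} simply as an immediate consequence of Theorems \ref{bsde we mainly proof} and \ref{th main result section 4}, with no additional argument beyond the sentence ``As a result of Theorems \ref{bsde we mainly proof} and \ref{th main result section 4} we obtain the main result of this paper.'' Your careful verification that the output of Theorem \ref{th main result section 4} (polynomial-growth $\varpi^i$, square-integrable $Z^i$, and the BSDE (\ref{main BSDE starting from 0 a})) feeds directly into the hypotheses of Theorem \ref{bsde we mainly proof} is precisely the bookkeeping the paper leaves implicit.
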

\no \textbf{Example}: The linear quadratic case. 
\ms

\noindent Let us take $m=1$ and for $t\in [0,T]$, $X^{0,x_0}_t=x_0+B_t$. Let $f(t,x,u,v)=ax+bu+cv$ and for $i=1,2$, $h_i(t,x,u,v)=\theta_ix^{p_i}+\gamma_iu^2+\rho_iv^2$, $U=[-1,1]$ and $V=[0,1]$ where $a,b,c$, $p_i$, $\rho_i$ $\gamma_i$ are real constants such that 
$p_i\geq 0$, $\gamma_1>0$ and $\rho_2>0$. Finally let $g^i$, $i=1,2$, be two Borel measurable functions with polynomial growth. 
The Hamiltonian functions $H_i$ of the nonzero-sum differential game associated with $X^{0,x_0}$, $f$, $h_i$, $g_i$, $i=1,2$, and $U$, $V$ are:
$$
H_i(t,x,z_i,u,v)=z_if(t,x,u,v)+h_i(t,x,u,v),\,\,i=1,2.
$$
Next for $\eta \in \R$ let $\psi$ and $\phi$ be functions from $\R$ to $\R$ defined by: 
$$\psi(\eta):= -1_{[\eta<-1]}+
\eta 1_{[-1\leq \eta\le1]}+ 1_{[\eta>1]}
\mbox { and }\phi(\eta):=1\wedge\eta^+.
$$
Thus the functions $u^*(t,x,z_1,z_2):=\psi(-\frac{bz_1}{2\gamma_1})$ and $v^*(t,x,z_1,z_2):=\phi(-\frac{cz_2}{2\rho_2})$ (which are continuous in $(z_1,z_2)$) verify the generalized Isaacs condition (A3). Therefore, according to Theorem \ref{theorem_existence_nep}, this game has a Nash equilibrium point $(u^*(t,X^{0,x_0}_t,Z_1(t),Z_2(t)),
v^*(t,X^{0,x_0}_t,Z_1(t),Z_2(t)))_{t\leq T}.$ 
\bs

\begin{remark} \label{generalisation}For sake of simplicity we have dealt with the case of two players. However the method still work if we have more than two players. We just need a minor adaptation of the generalized Isaacs condition of (A3). Actually assume there are $N$ players $P_1,...,P_N$ ($N\geq 3$) and for $i=1,...,N$, let $U_i$ be the compact set where the controls of player $P_i$ take their values. Let $(H_i)_{i=1,...,N}$ be the Hamiltonian functions of the nonzero-sum differential game associated with $f(t,x,u_1,...,u_N)$, $h_i(t,x,u_1,...,u_N)$ and $g^i$, i.e.,
$$H_i(t,x,z_i,u_1,...,u_N):=z_i\sigma^{-1}(t,x).f(t,x,u_1,...,u_N)+h_i(t,x,u_1,...,u_N).
$$
We assume that, uniformly w.r.t. $x$, $f$ is of linear growth and $h_i$, $g^i$, $i=1,...,N$, are of polynomial growth. Next assume that generalized Isaacs condition, which reads as below, is satisfied:
\ms 

\noindent (i) There exist $N$ Borel functions $u_1^*,...,u_N^*$ defined
  on $[0,T]\times \R^{(N+1)m}$, valued respectively in $U_1,..., U_N$, such that for any $i=1,...,N$, $(t, x, z_1,...,z_N)\in [0, T]\times \R^{(N+1)m}$, we have:
$$\begin{array}{l}H_i(t, x, z_i, (u_1^*,....,u_N^*)(t,x,z_1,...,z_N))\\
 \!\leq\! 
  H_i(t, x, z_i, 
(u_1^*,....,u_{i-1}^*)(t,x,z_1,...,z_N),\!u_i,\!(u_{i+1}^*,....,u_N^*)(t,x,z_1,...,z_N)), \!\forall u_i\in U_i\,;\end{array}$$
(ii) For any fixed $(t,x)$ the mapping 
$$
(z_1,...,z_N)\longmapsto H_i(t, x, z_i, (u_1^*,....,u_N^*)(t,x,z_1,...,z_N))
$$
is continuous. 
\ms

Then, if $\sigma (t,x)$ verifies the Assumption (A1), the differential game associated with the drift $f(t,x,u_1,...,u_N)$, the instantaneous payoffs $h_i(t,x,u_1,...,u_N)$ and the terminal payoffs $g^i(x)$, $i=1,...,N$, has a Nash equilibrium point. 
\end{remark}

\no \textbf{Extension}:  In this study the main points we required are:

(i) the existence of $p>1$ such that for any pair $(u,v)\in \cM$, $\be[(\zeta_T(u,v))^p]<\infty$ where 
$\zeta_T(u,v)$ is defined as in (\ref{defzeta}) ;  

(ii) the $L^q$-domination property or its adaptation ; 

(iii) the generalized Isaacs condition. 

\no As far as those points are in force, one can expect that the NZSDG has a NEP and, e.g., one can let drop H\"ormander's condition (\ref{horm}) on $\sigma$. Actually let us consider the following example where the matrix $\sigma$ is non longer bounded and does satisfy (\ref{horm}). Assume that $m=1$ and for $(t,x)\in [0,T]\times \R$, $X^{t,x}$ is solution of:
$$
dX^{t,x}_s=X^{t,x}_sdB_s, \,\,s\in [t,T]\mbox{ and }X^{t,x}_s=x>0 \mbox{ for }s\in [0,t].
$$
Note that
\begin{equation}\label{positivity}
\forall \, s\leq T, \, X^{t,x}_s>0. 
\end{equation}
Next let $$f(s,x,u,v):=x(u_s+v_s), \,s\leq T,\, U=[-1,1],\,V=[0,1]$$ and assume that $h_1$ and $h_2$ are as in the previous example. 
Therefore the generalized Isaacs condition is satisfied with $$u^*(t,x,z_1,z_2):=\psi(-\frac{z_1}{2\gamma_1})\mbox{ and } v^*(t,x,z_1,z_2):=\phi(-\frac{z_2}{2\rho_2}),$$
where $\psi$ and $\phi$ are the functions defined above. 

Obviously for any $(u,v)\in \cM$, $\zeta_T((u_s+v_s))$ belongs to $L^p(\bp)$ for any $p>1$ since $U$ and $V$ are bounded sets. Next for any $s\in [t,T]$, we have:
$$
X^{t,x}_s=x\exp\{B_s-B_t-\frac{1}{2}(s-t)\}.
$$
Thus for $s\in ]t,T]$, the law of $X^{t,x}_s$ has a density 
$p(t,x; s,y)$ given by
$$
p(t,x; s,y):=\frac{1}{\sqrt{2\pi(s-t)}}\exp\{-\frac{1}{2(s-t)}[\ln(\frac{y}{x})+\frac{1}{2}(s-t)]^2\}1_{[y>0]}, \,\,y\in \R.
$$
So let $x_0>0$, $\d>0$ ($\delta+t<T$) and $\Phi$ defined by:
$$\Phi(s,y):=\frac{p(t,x; s,y)}{p(0,x_0; s,y)}, (s,y)\in [t+\d,T]\times \R.$$
Therefore for any $\kappa>0$, $\Phi$ belongs to 
$L^q([t+\d,T]\times [\frac{1}{\kappa},\kappa], p(0,x_0; s,y)dsdy)$ which is the adaptation of property (ii) of Definition \ref{cdtdom} on $L^q$-domination, in taking into account (\ref{positivity}). Now as the following estimate holds true:
$$
\be\Big[\int_{t+\delta}^T
\{1_{[X_s^{t,x}<\kappa^{-1}]}+
1_{[X_s^{t,x}>\kappa]}\}ds
\Big]\leq \varrho(\kappa),
$$
where the function $\varrho$ is such that $\varrho(\kappa)\rw 0$ as $\kappa\rw \infty$. Then one can conclude that the NZSDG defined with those specific data $\sigma$, $U$, $V$, $f$, $g_i$, $h_i$, 
$i=1,2$, has a Nash equilibrium point. The proof can be established in the same way as we did in the previous sections.  $\Box$

\section*{Acknowledgement} We are grateful to the two anonymous referees for useful comments and suggestions.

\end{document}